\newcommand{\abs}[1]{\left\vert#1\right\vert}           
\newcommand{\ap}[1]{\left\langle#1\right\rangle}        
\newcommand{\fl}[1]{\left\lfloor#1\right\rfloor}        
\newcommand{\cl}[1]{\left\lceil#1\right\rceil}          
\newcommand{\norm}[1]{\left\Vert#1\right\Vert}          
\newcommand{\tnorm}[1]{\left\vert\!\left\vert\!\left\vert#1\right\vert\!\right\vert\!\right\vert}
\newcommand\myeq{\stackrel{\mathclap{\normalfont\mbox{*}}}{=}}
\renewcommand{\div}{\operatorname{div}}       
\DeclareMathOperator{\Lip}{Lip}
\DeclareMathOperator{\dist}{dist}
\def\d{\,\mathrm{d}}
\def \ddt{\frac{\mathrm{d}}{\mathrm{d}t}}
\def\ird{\int_{\R^d}}
\def\R{\mathbb{R}}
\def\M{\mathcal{M}}
\def\Var{{\textrm{Var}}\,}
\def\Id{{\textrm{Id}}\,}
\def\e{\varepsilon}
\def\f{\varphi}
\def\:{\colon}  
\def\Jn{J_\varepsilon^{t_1,\dots,t_n}}
\def\JJi{\hat{f}\Big(\frac{\xi\sigma_j}{\sqrt n\bar\sigma}\Big)}
\def\GGi{\hat{G}\Big(\frac{\xi\sigma_j}{\sqrt n\bar\sigma}\Big)}
\def\JJj{\hat{f}\Big(\frac{\xi\sigma_k}{\sqrt n\bar\sigma}\Big)}
\def\GGj{\hat{G}\Big(\frac{\xi\sigma_k}{\sqrt n\bar\sigma}\Big)}
\def\Jnorm{\mathfrak{f}_n}
\def\Jnormh{\hat{\mathfrak{f}}_n}
\newtheorem{theorem}{Theorem}[section]
\newtheorem{corollary}[theorem]{Corollary}
\newtheorem{lemma}[theorem]{Lemma}
\newtheorem{proposition}[theorem]{Proposition}
\theoremstyle{definition}
\newtheorem{definition}[theorem]{Definition}
\newtheorem{remark}[theorem]{Remark}
\newtheorem{hyp}{Hypothesis}
\def\thetitle{A uniform-in-time nonlocal approximation of the
	standard Fokker-Planck equation}
\def\theauthor{José A.~Cañizo \& Niccolò Tassi}
\title{\thetitle}
\author{\theauthor}
\date{}
\begin{document}
\maketitle

\begin{abstract}
  We study a nonlocal approximation of the Fokker-Planck equation in
  which we can estimate the speed of convergence to equilibrium in a
  way which does not degenerate as we approach the local limit of the
  equation. This uniform estimate cannot be easily obtained with
  standard inequalities or entropy methods, but can be obtained
  through the use of Harris’s theorem, finding interesting links to
  quantitative versions of the central limit theorem in
  probability. As a consequence one can prove that solutions of this
  nonlocal approximation converge to solutions of the usual
  Fokker-Planck equation uniformly in time---hence we show the
  approximation is asymptotic-preserving in this sense. The associated
  equilibrium has some interesting tail and regularity properties,
  which we also study.
\end{abstract}
\setcounter{tocdepth}{2}
\tableofcontents

\section{Introduction}
We consider the linear partial differential equation
\begin{equation}
  \label{NLFP} \tag{NLFP}
  \partial_t u
  = \frac{1}{\e^2} \big( J_\e*u-u \big) + \div(xu)  
    =: L_\e u,
\end{equation}
where $u \: [0,+\infty) \times \R^d \to \R$ is the unknown whose
variables we usually name by $u \equiv u(t,x)$, and $\e > 0$ is a
scaling parameter. This PDE involves a nonnegative kernel
$J \: \R^d \to [0,+\infty)$ which we always assume at least
integrable,  
and such that
\begin{equation}
  \label{eq:J-moments-basic}
  \begin{split}    
    \ird J(x) \d x &= 1,
    \qquad \ird J(x) x \d x=0
    \qquad \ird J(x) x_i x_j \d x = 2 \delta_{ij},
  \end{split}
\end{equation}
for $i,j = 1, \dots, d$. Making straightforward scaling arguments one
can also apply our results if $J$ is not of mass $1$.

A canonical example of $J$ for which our results are meaningful is any
smooth, compactly supported probability distribution satisfying the
above, which is a helpful case to keep in mind. The scaled kernel
$J_\e$ is defined by
\begin{equation*}
  J_\e(x) := \e^{-d} J\Big( \frac{x}{\e} \Big),
  \qquad x \in \R^d,
\end{equation*}
and the convolution in (\ref{NLFP}) is understood to be in the $x$
variable. Thus \eqref{NLFP} is, strictly speaking, a partial
integro-differential equation since it involves an integral kernel. We
choose this particular scaling in $\e$ since it is well known that as
$\e \to 0$ the PDE (\ref{NLFP}) resembles the standard Fokker-Planck
PDE
\begin{equation}
  \label{eq:FP}\tag{FP}
  \partial_t u = \Delta u + \div(xu)=:L_0u.
\end{equation}
We refer to the
book by \cite{AndreuVaillo2010} for a good explanation of this limit.
Hence, equation (\ref{NLFP}) can be understood as a nonlocal version of
the standard Fokker-Planck equation.

Our interest in studying equation \eqref{NLFP} is not directly due to
the model itself, but as a proof of concept of a new method to study
nonlocal-to-local limits. Equation \eqref{NLFP} is a nonlocal
diffusion which is interesting enough to have non-explicit equilibria,
which likely do not have Gaussian decay (see Section
\ref{sec:tails}). Standard entropy methods are hard to apply to this
PDE, especially in the limit $\e \to 0$. Though the equation can be
studied by Fourier methods (see \cite{AndreuVaillo2010}), we are
interested in methods which do not depend on an explicit Fourier
transform solution. This said, we also point out that the drift
$\div (x u)$ appears in a natural way if we consider the PDE
$\partial_t v = J*v - v$ and then carry out the standard diffusive change of
variables $u(t,x) = e^{d t} v(\frac12 e^{2t}, x e^t)$. The equation
for $u$ then becomes
\begin{equation}
    \partial_t u = \frac{1}{\e(t)^2} \left( J_{\e(t)} * u - u \right) + \div(xu),
\end{equation}
where now $\e(t) = e^{-t}$. Equation \eqref{NLFP} is then the result of ``freezing'' the parameter $\e$ in this equation, and as such is a natural model to understand regarding nonlocal diffusion.

In contrast with local diffusion, the nonlocal PDE (\ref{NLFP})
does not have instantaneous regularization properties. Both PDE (\ref{NLFP}) and
(\ref{eq:FP}) have a stochastic interpretation in terms of a Markov
process; we refer to standard texts such as \cite{Risken1996} for
this. In this paper we are interested in understanding the asymptotic
behavior of solutions to \eqref{NLFP} as $t \to +\infty$ and as $\e \to 0$ by the use of
a new method based on Harris's theorem, since techniques for nonlocal
PDE still have many shortcomings and this equation is a good test
case. We pay particular attention to whether the asymptotic behavior
of \eqref{NLFP} matches that of the standard Fokker-Planck equation as
$\e \to 0$. The use of Harris's theorem is by no means new in the
study of nonlocal PDE, but as far as we know it is the first time that
it can be applied to give a result which is stable under the scaling
in (\ref{NLFP}). We consider this the main novelty of this paper.

Obviously, since our results will be valid also for a fixed $\e$, we
may frame the problem in a slightly different way by studying the PDE
\begin{equation*}
  \partial_t u
   = c \big( J*u-u \big) + \div(xu),
\end{equation*}
    for $J$ a general probability distribution satisfying \eqref{eq:J-moments-basic}. We have chosen to write the PDE as in
\eqref{NLFP} in order to emphasize that we are especially interested
in its scaling limit to the Fokker-Planck equation.

\paragraph{Summary of main results}

Here are the two main results we will prove in this paper:
\begin{enumerate}
\item Solutions to (\ref{NLFP}) converge to equilibrium exponentially:
  \begin{equation*}
    \| u(t, \cdot) - F_\e \| \leq C e^{-\gamma t} \|u_0 -
    F_\e\|,
    \qquad t \geq 0
  \end{equation*}
  in a variety of weighted $L^1$ norms $\|\cdot\|$, where $F_\e$
  is a suitable equilibrium of the PDE. Our main point is that \emph{both
  constants $C$ and $\gamma$ can be taken to be independent of
  $\e$} as $\e \to 0$. This is not contained in previous
  results, except for \cite{mischler_uniform_2017}, who use different
  techniques.
  
\item If $u$ is the solution to the nonlocal equation (\ref{NLFP}) with a symmetric $J$ and
  $v$ the solution to the standard Fokker-Planck equation
  (\ref{eq:FP}) with the same initial condition, then
  \begin{equation*}
    \| u(t,\cdot) - v(t,\cdot) \| \leq C \e^2
    \qquad \text{for all $t \geq 0$,}
  \end{equation*}
  again in a variety of weighted $L^1$ norms. The constant $C$ is
  independent of $\e$ and $t$, so the convergence happens uniformly
  for all times. We refer to this as ``asymptotic-preserving
  convergence'', and is not proved in previous literature as far as we
  know.
\end{enumerate}
Let us describe these results more precisely. We always denote by
$L_1^k$ the usual weighted Lebesgue space with norm
$$\norm{\f}_{L^1_k}=\ird (1+|x|^k) |\f(x)| \d x$$ or with the
equivalent norm
$$\norm{\f}_{L^1_{\langle k\rangle}}=\ird \ap{x}^k |\f(x)| \d x,$$
where $\ap{x}^k:=(1+|x|^2)^{k/2}$, a ``smoothed $k$-th power'' which
avoids regularity problems at $x=0$. (We denote by $\ap{x}$ the
smoothed $1^{st}$ power.) Similarly we define $L^1_{\exp,a}$ as the
Lebesgue space with weight $e^{a|x|}$ with natural norm
$$
\norm{\f}_{\exp, a}=\ird e^{a|x|}\abs{\f(x)}\d x$$
or with equivalent norm
$$
\norm{\f}_{\ap{\exp}, a}=\ird e^{a\ap{x}}\abs{\f(x)}\d x.$$

An \emph{equilibrium} of equation (\ref{NLFP}) is a function $F_\e \in
L^1$ which satisfies the stationary PDE
\begin{equation*}
  \frac{1}{\e^2} \big( J_\e*F_\e - F_\e \big) + \div(xF_\e) = 0
\end{equation*}
in the distributional sense. A \emph{probability equilibrium} is a
nonnegative equilibrium $F_\e$ with $\ird F_\e(x) \d x = 1$. It is easy
to see that there exists at most one probability equilibrium (see
Theorem \eqref{eq: eqfourier}) and under mild additional conditions we
also show existence (and additional estimates). Here is the main
theorem we prove in this paper regarding asymptotic behavior of mild
solutions.
\begin{theorem}
  \label{spectral gap}
  Let $\e \in (0,1]$ and let $J \: \R^d \to [0,+\infty)$ satisfy
  (\ref{eq:J-moments-basic}) and, moreover, $J\in L^p$ for some
  $p\in(1,\infty]$. Then,
  \begin{enumerate} [(i)]
  \item \label{itemi} If $J \in L^1_{k'}$ for some $k'>2$, then there
    exists a unique equilibrium $F_\e$ of (\ref{NLFP}) in
    $L^1_{k'}$. Additionally, for every $0 < k \leq k'$ there exist
    (explicit) constants $C \geq 1$ and $\gamma > 0$, independent of
    $\e$, such that any (mild) solution $u$ of (\ref{NLFP}) with a
    probability initial condition $u_0 \in L^1_k$ satisfies
  \begin{equation*}
    \norm{u(t,\cdot)-F_\e}_{L^1_k}
    \le
    Ce^{-\gamma t}\norm{u_0-F_\e}_{L^1_k}
    \qquad \text{for every $t\ge 0$}.
  \end{equation*}
  
\item \label{itemii} If $J\in L^1_{\exp, a'}$, for some $a'>0$, then
  there exists a unique equilibrium $F_\e$ of (\ref{NLFP}) in
  $L^1_{\exp,a}$, for all $a<a'$. Additionally, there exist (explicit)
  constants $C \geq 1$ and $\gamma > 0$, independent of $\e$, such
  that any (mild) solution $u$ of (\ref{NLFP}) with a probability
  initial condition $u_0 \in L^1_{\exp, a}$ satisfies
  \begin{equation*}
    \norm{u(t,\cdot)-F_\e}_{L^1_{\exp,a}}
    \le
    Ce^{-\gamma t}\norm{u_0-F_\e}_{L^1_{\exp,a}}
    \qquad \text{for every $t\ge 0$}.
  \end{equation*}
  We remark that the unique equilibrium $F_\e$ also belongs to the
  (smaller, but $\e$-dependent) space $L^1_{\exp,b}$ for any
  $b< a' \e^{-1}$.
\end{enumerate}
\end{theorem}
The dependence of the constants can be explicitly traced throughout the proof. Among the various factors involved, the constants in general get better the closer $J$ resembles a Gaussian, in a sense that will be clarified later.

\begin{remark}\label{rmk: deacymass0} 
From our proof of the theorem (or directly as a
  simple consequence) we get the same estimates substituting
$u - F_\e$ by any solution $w$ with initial mass $0$ (i.e.
$\ird w_0(x) \d x = 0$). For example, we prove that under the
conditions of point (i),
\begin{equation*}
  \norm{w(t,\cdot)}_{L^1_k}
  \le
  Ce^{-\gamma t}\norm{w_0}_{L^1_k}
  \qquad \text{for every $t\ge 0$}.
\end{equation*}
This is indeed more general than our statement since one can take $w_0 = u_0
- F_\e$, but we choose to state the theorem as above for readability.
   
\end{remark}

We remark that both points (i) and (ii) of the previous theorem, and
all of our results concerning asymptotic behaviour, require in
particular that there exists $s\in (0,1]$ such that
\begin{equation}\label{eq:smoment}
  \ird J(x)|x|^{2+s}\d x=:\rho_{2+s}<\infty.
\end{equation}
This quantity $\rho_{2+s}$ will be fundamental for our estimates in
Section \ref{sec:asymptotic-behavior}.

Proving an estimate like the previous one has several
consequences. One of these is the following uniform-in-time result on
the closeness solutions to the local and nonlocal problems, which we
prove in Section \ref{sec:nonlocal-to-local}. For readability, we
state the theorem with likely non-optimal hypotheses, since optimizing
it is not the main goal of this paper. It requires the following
additional hypothesis on $J$:
\begin{equation}
  \label{eq:Jsym} 
  \ird J(x)x^\alpha\d x=0
  \qquad
  \text{for all multiindices $\alpha$ of order $|\alpha|=3$,}
\end{equation}
which is satisfied for example if $J$ is symmetric.

\begin{theorem}[Nonlocal-to-local approximation]
  \label{thm:asymptconv}
  Take $k\ge 0$ and assume $J\in L^1_{k+4+d}$ satisfies
  \eqref{eq:J-moments-basic} and \eqref{eq:Jsym}, $J \in L^p$ for some
  $p \in (1,\infty]$.  Let $u$ be the mild solution to equation
  (\ref{NLFP}) with a probability initial condition
  $u_0 \in L^1_k\cap \mathcal{C}^4$, and such that there exist
  $c > 0$ and $K>k+d$ with $|D^\alpha u_0(x)| \le c\ap{x}^{-K}$ for
  all multiindices $\alpha$ with $|\alpha|=4$.
  
  Let $v$ the (unique) integrable solution to the (local)
  Fokker-Planck equation (\ref{eq:FP}) with the same initial
  condition.  Then, there exists $C > 0$ such that
  \begin{equation}\label{eq: AP2}
    \norm{u(t, \cdot)-v(t, \cdot)}_{L^1_k}
    \leq C \e^2
    \qquad \text{for every $t\ge 0$ and $\e \in (0,1]$.}
  \end{equation}
  We emphasize that the constant $C$ is independent of $\e$.
\end{theorem}

\begin{remark}
  \label{rem:order-of-convergence}
  If we do not assume (\ref{eq:Jsym}), then under slightly weaker
  assumptions ($u_0\in L^1_k \cap \mathcal{C}^3$, $J\in L^1_{k+3 +d}$) one can show that
  \begin{equation}
    \label{eq: AP}
    \norm{u(t, \cdot)-v(t, \cdot)}_{L^1_k}
    \leq C \e
    \qquad \text{for every $t\ge 0$ and $\e \in (0,1]$,}
  \end{equation}
  for some (other) $\e$-independent constant $C > 0$. In order to obtain this it is enough to carry out the Taylor
  expansions in Section \ref{sec:nonlocal-to-local} to one order less.
\end{remark}

\begin{remark}\label{rmk: sec4exp} With the same strategy one
  can prove the previous statement also in Lebesgue spaces with
  exponential weight. Assume $J\in L^1_{\exp,a'}$ and
  $u_0\in L^1_{\exp,a'}\cap \mathcal{C}^3$. Then \eqref{eq: AP} holds also in
  $L^1_{\exp,a}$ for every $a<a'/2$. Moreover if \eqref{eq:Jsym}
  holds and $u_0\in \mathcal{C}^4$, then equation \eqref{eq: AP2} is also valid
  in $L^1_{\exp,a}$, for every $a<a'/2$. We don't give the proofs,
  since they are essentially the same as the ones in $L^1_k$
  spaces.
\end{remark}

Throughout the paper (as was used in the theorem above), we will use
multi-index notation for derivatives, so if $\alpha\in\mathbb{N}^d$ is
a multi-index we write
$|\alpha|=\sum_i\alpha_i$,
$x^\alpha=x_1^{\alpha_1} x_2^{\alpha_2}\cdots x_d^{\alpha_d}$,
$\alpha!=\alpha_1!\alpha_2!\cdots\alpha_d!$ and
$D^\alpha\f(x)=\partial_{x_1}^{\alpha_1}\partial_{x_2}^{\alpha_2}\dots\partial_{x_d}^{\alpha_d}\f(x)$.

We emphasize once more that, although being far from optimal, the
constants we find are all explicit, since proofs are completely
constructive.

\medskip

We will also make a few remarks regarding the unique probability
equilibrium $F_\e$ associated to (\ref{NLFP}). Theorems
\ref{spectral gap} and \ref{thm:asymptconv} imply in particular that
$\|F_\e- G\|_{L^1_k} \leq C \e^2$, where $G$ is the standard Gaussian
probability distribution in $\R^d$. But the properties of $F_\e$ are
quite interesting in their own right. Theorem \ref{spectral gap}
includes the fact that $F_\e$ has exponentially-decaying tails as long as $J$
has the same property. However, when $J$ is compactly supported we
suspect that $F_\e$ has tails which decay like the Poisson
distribution, that is $F_\e \approx e^{-a |x| \log |x|}$ for large
$|x|$. We are able to show an incomplete version of this statement in
Section \ref{sec:tails}. We suspect that in general $F_\e$ does
\emph{not} have Gaussian tails, but we haven't been able to prove
this.

The regularity of $F_\e$ is also interesting, as we explore in Section
\ref{sec:regularity}: independently of the regularity of $J$, $F_\e$
appears to be smooth everywhere except at $x=0$; and its regularity at
$x=0$ depends on $\e$, not on the regularity nor tails of $J$. The
equilibrium $F_\e$ becomes more regular as $\e \to 0$. We think a more complete
study of the properties of $F_\e$ would be interesting for a future work.

\paragraph{Strategy of the proof}

The central point of our results is the proof of \eqref{itemi} and
\eqref{itemii} in Theorem \ref{spectral gap}. The proof of Theorem
\ref{thm:asymptconv} can then be obtained as a consequence with fairly
standard estimates. As mentioned, the proof of Theorem \ref{spectral
  gap} uses Harris's theorem from probability, which requires us to
show a certain positivity property of solutions of our nonlocal
equation (\ref{NLFP}); see Section \ref{harristheory}, where we give a
short introduction to this. Showing that solutions are positive for
any \emph{fixed} $\e > 0$ is not hard; and the fact that
solutions to the (local) Fokker-Planck equation (\ref{eq:FP}) are
positive is well known, and a consequence of its regularization
properties and the parabolic Harnack inequality (or just a consequence
of its explicit solution!). However, finding a positivity result which
is valid for all $\e>0$ and which does not degenerate in the
limit $\e \to 0$ is not an easy task. We notice that it is
linked to convergence results in the central limit theorem, since the
solution $u$ to (\ref{NLFP}) can be written as (see Section
\ref{sec:representation})
\begin{equation*}
  u(t)(x)
  =
   e^{(d-\frac{1}{\e^2})t}
 \sum_{n=0}^\infty \Big(\frac{1}{\e^2}\Big)^n
  \int_0^t\int_0^{t_{1}}\dots\int_0^{t_{n-1}}\Jn*u_0(e^tx) \d
  t_n\dots \d t_1,
\end{equation*}
where it is understood that the term for $n=0$ is just $u_0(e^t x)$, and
\begin{equation*}
  \Jn:=J_{\e e^{t_1}}*\cdots*J_{\e e^{t_n}}(x).
\end{equation*}
Each of the terms represents the distribution of particles which have
undergone $n$ random jumps, if we see (\ref{NLFP}) as the PDE for the
distribution of a jump stochastic process. Proving positivity of $u$
can then be translated to the problem of understanding the behavior of
repeated convolutions of scalings of the function $J$, which can in
turn be understood in terms of the central limit theorem in
probability. This can be made into a rigorous proof, as we show in
this paper. We think it provides an intriguing link among these
different tools, and it yields a result which is otherwise non-obvious
to obtain.

\paragraph{Previous literature and related results}

We are not aware of any studies regarding the specific PDE
(\ref{NLFP}) except for \cite{mischler_uniform_2017}, whose results
are similar to our Theorem \ref{spectral gap}, but use very different
techniques (our conditions on $J$ are somewhat simplified, allowing
less regular and non-symmetric $J$). Our result in Theorem
\ref{thm:asymptconv} regarding the nonlocal-to-local limit as
$\e \to 0$ is new as far as we know. There are plenty of results on
related equations, of which we give a short summary now, but we
highlight that nonlocal-to-local limits which preserve the asymptotic
behavior in the limit are comparatively rare, and are a current source
of interest in several contexts.

The approach from nonlocal diffusion-type
operators of the form $J * u - u$ to local diffusion operators has
been the subject of many previous works. The PDE $\partial_t u = J*u - u$ is
studied in \cite{ChassChavesRossi2006}, and a review of related
techniques (making strong use of explicit solutions via the Fourier
transform) is given in \cite{AndreuVaillo2010}. Generalizations of
this result to similar nonlocal PDE with different integral kernels
are given in \cite{molino2019nonlocal}. The same PDE is studied in
\cite{Rey2013}, with a focus on its large-time behavior. In this case
the motivation comes from both physics (since this PDE is thought to
be a more appropriate diffusion model in some cases) and numerical
analysis (see below for more on this). Some results on its large-time
behavior can be obtained by using Fourier-type distances
\citep{Carrillo2007}, again by taking advantage of the fact that the
PDE is explicitly solvable in Fourier variables. A different nonlocal
and nonlinear version of the Fokker-Planck PDE (\ref{eq:FP}) is
studied in \cite{ignat2007nonlocal}. Its relationship to the local
problem is also studied there, but the results on asymptotic behavior
are not preserved when passing to the local limit.
Still a different nonlocal version has been recently considered in
\cite{auricchio2023}, motivated by a model in collective behavior.

We mentioned that equation (\ref{NLFP}) is a good test case for
ideas to study the asymptotic behavior of nonlocal PDE, and we would
like to describe a few more involved situations where this is
relevant. An equation close to (\ref{NLFP}) was proposed in \cite{cai2006stochastic} as a model for the probability of the concentration of a
certain protein which is being expressed by a particular gene. This
equation was later studied in \cite{Canizo2018b} by means of an entropy
inequality developed specifically for this PDE, which highlights that
a general strategy is lacking even for such simple nonlocal
equations.

Another important domain where these ideas are relevant is numerical
analysis. In dimension $d=1$, if we formally take
$J = \frac{1}{2} \left( \delta_{-1} + \delta_1 \right)$ then the
equation $\partial_t u = \frac{1}{\e^2} (J_{\e}*u -u)$, when evaluated at a
set $\{x_i\}_{i=0, \dots, N}$ of equally spaced nodes at distance
$\e$, becomes a numerical scheme for solving the standard heat
equation on $\R$. The proof in this paper does not apply directly to
this case, since this kernel does not have the $L^p$ regularity we
need. The question of whether a numerical method preserves the
large-time behavior of its limiting equation is of practical
importance, and it is a subject of recent research
\cite{ayi_structure-preserving_2022}, \cite{dujardin2020coercivity},
\cite{cances2020large} and \cite{bessemoulin2020hypocoercivity}.  We
sometimes refer to this as the property of the scheme being \emph{time
  asymptotic-preserving} (just ``asymptotic-preserving'' is often used
for schemes which are well-behaved under a certain scaling limit of
their associated PDE, which is not our case).  Numerical schemes for
PDE are nonlocal (in the sense that the equation at one of the nodes
depends on the value at other nodes, which are at a fixed positive
distance) and the study of their asymptotic behavior often encounters
similar difficulties as for nonlocal PDE. This was one of the
motivations of \cite{Rey2013}, and we have the same hope that
developing different methods for nonlocal PDEs can also suggest ideas
to understand the behavior of some numerical methods.

We finally mention that several methods to study the asymptotic
behavior of nonlocal PDE have been developed in the context of kinetic
theory. Some examples for linear PDE include semigroup tools and
spectral methods \citep{Gualdani2018}, techniques for hypocoercive PDE
\citep{villanihypo, dolbeauthypo}, applications of Harris's theorem
\citep{canizo2020hypocoercivity}, and entropy techniques, which are
often useful also for nonlinear PDE \citep{lods2008relaxation,
  Carrillo2007, bisi2015entropy}. Of these methods, the first one
based on spectral methods is the basis of
\cite{mischler_uniform_2017}; entropy techniques are the main tool of
\cite{Canizo2018b} on a very similar PDE; and as far as we
know our paper is the first time Harris-based techniques are used to
get the correct behavior as $\e \to 0$. It does not seem
straightforward to use entropy techniques to get a similar result,
since an inequality along the lines of \cite{Canizo2018b}
does not respect the scaling as $\e \to 0$, but finding a way to do so
is in our opinion an interesting problem.

\paragraph{Structure of the paper}

In Section \ref{sec:well-posed}, we present results related to the
well-posedness of the equation \eqref{NLFP} in $L_k^1$. For this
purpose we define the \textit{mild solutions} of \eqref{NLFP} by a
standard use of Duhamel's formula, for which we prove existence,
uniqueness and continuous dependence on the initial data via the well
known fixed-point strategy. We point out that the same strategy can be
used to prove well-posedness in measure spaces, and interpret these
results also in terms of semigroups.

In the last part of Section \ref{sec:well-posed} we give two explicit
representations of the solution: a classical one via Fourier
transform, and the Wild sum representation which we use in a crucial
way.

Section \ref{sec:asymptotic-behavior} contains the proof of Theorem
\ref{spectral gap}. We give a short introduction to Harris's Theorem,
which requires two main conditions: a Lyapunov confinement condition
which is fairly straightforward in our case, based on a Taylor's
expansion; and a positivity condition which is the key part of our
argument, as we discussed above. Proving this positivity condition
requires a suitable version of the Berry-Esseen Theorem, a form of the
central limit theorem which gives a rate of convergence in the
$L^\infty$ norm. We need a result for independent, not necessarily
identically distributed, random vectors. Afterwards, the proof of
positivity is straightforward once a suitable scaling is carried out.

Section \ref{sec:nonlocal-to-local} deals with the speed of
convergence of the Nonlocal Fokker Planck equation (\ref{NLFP}) to the
local on (\ref{eq:FP}) at all time $t$, and gives the proof of Theorem
\ref{thm:asymptconv}. The rate of convergence at finite times comes
from a standard consistency and stability argument in the spirit of
numerical methods. The latter result, combined with Theorem
\ref{spectral gap}, allows us to extend the convergence for all times.

In the last section \ref{sec: moments} we gather some independent
results on the regularity of the equilibrium, showing that it improves as $\e\to 0$. Moreover, we give a representation of the solution of \eqref{NLFP}
in terms of the cumulative generating function, which is nothing else
than the logarithm of the moment generating function. This gives in
particular an explicit representation for the moments of the
equilibrium. We also investigate on the tails of the solution of
\eqref{NLFP} and its equilibrium $F_\e$. In the local case, the
equilibrium is a Gaussian and the solution has Gaussian tails if the
initial data has. For the nonlocal equation (\ref{NLFP}) we strongly
suspect this is not the case, even for very nice initial data and
kernel. We are able to prove that for compactly supported $J$ the
tails are at least Poisson-like.

\section{Well-posedness of the initial value problem}
\label{sec:well-posed}

We gather in this section some results on the well-posedness of
equation (\ref{NLFP}), that is,
\begin{equation*}
  \partial_t u = \frac{1}{\e^2} \big(J_\e*u-u\big) + \div(xu),
\end{equation*}
with the addition of an initial condition $u_0 \in L^1_k$.
\begin{equation*}
  u(0,x) = u_0(x), \qquad x \in \R^d.
\end{equation*}
Though they are mostly standard results for linear PDE, it is useful
to lay out the basic theory.

\subsection{Existence and Uniqueness}
Solutions to equation \eqref{NLFP} can be studied (for example) by
using semigroup theory, which easily yields mild solutions
$u \in \mathcal{C}([0,\infty), L_k^1(\mathbb{R}^d))$ whenever
$u_0 \in L^1_k$, for some $k \geq 0$. Alternatively, we give a
direct simple proof of existence of mild solutions. The first
observation is that the equation
\begin{equation*}
  \partial_t u = \div(xu)-\frac{1}{\e^2} u, \qquad u(0,x) = u_0(x)
\end{equation*}
has the explicit classical solution
\begin{equation*}
  u(t,x) = e^{(d-1/\e^2)t} u_0(e^t x), \qquad t \geq 0, x \in \R^d,
\end{equation*}
at least for a differentiable $u_0$. Hence, for any $u_0 \in L^1$
we define
\begin{equation}
  \label{eq:St}
  T_t[u_0] (x) \equiv T_t u_0(x) := e^{(d-1/\e^2)t} u_0(e^t x)
  \qquad \text{for $t \geq 0$, $x \in \R^d$.}
\end{equation}
Following a well known strategy, we use Duhamel's formula to define
mild solutions of equation \eqref{NLFP} as follows:

\begin{definition}[Mild solution]
  \label{dfn:mild}
  Let $k \geq 0$, and take $J$ satisfying Hypothesis
  \eqref{eq:J-moments-basic} on $J$ and $J\in L^1_k$. Take
  $u_0 \in L^1_k$, and take $T^* \in (0,+\infty]$.  A \emph{mild
    solution} of equation \eqref{NLFP} on $[0,T^*)$ with initial
  condition $u_0$ is a function $u \in \mathcal{C}([0,T^*), L^1_k)$ such
  that
  \begin{equation*}\label{mild1}
    u(t) = T_t u_0 +  \frac{1}{\e^2}\int_0^t T_{t-s}
    \big[ J_\e*u(s)\big] \d s
    \qquad
    \text{for all $t \in I$,}
  \end{equation*}
  where the integral is understood (equivalently) in the Riemann or
  Bochner sense.
\end{definition}

\subsubsection{Existence of $L^1_k$ solutions}

We are going to show the following result:

\begin{theorem}
  \label{thm:wellposed}
  Take $k \geq 0$ and let $J$ satisfy
  \eqref{eq:J-moments-basic} and, in addition, $J \in L_1^k$. Then,
  there exists a unique mild solution of equation \eqref{NLFP} with
  initial condition $u_0\in L^1_k$ (in the sense of Definition
  \ref{dfn:mild}). In addition, the solution is continuously dependent
  on the initial data, in the following sense: for any two mild solutions $u$
  and $v$ of \eqref{NLFP} with initial data $u_0$ and $v_0$
  \begin{equation*}
    \norm{u(t)-v(t)}_{L^1_k}\le e^{\frac{t}{\e^2}(c_k-1)}\norm{u_0-v_0}_{L^1_k}
  \end{equation*}
\end{theorem}

The proof of this is fairly straightforward, and we give it after a
couple of lemmas. We first notice that the convolution operator
$u \mapsto J * u$ is a continuous operator under fairly mild
conditions. For any measurable functions $u \: \R^d \to \R$ and
$V\: \R^d \to [1,+\infty)$ we denote
\begin{equation*}
    \| u\|_V := \ird |u(x)| V(x) \d x.
\end{equation*}
We need the following generalization of Young's inequality in weighted
spaces, whose proof we omit since it is obtained by a direct
computation using the subadditivity in the assumptions:
 \begin{lemma}\label{lem:weighted_young}
  Take a measurable $J \: \R^d \to \R$ such that $\|J\|_V\|u\|_V <
  +\infty$. Then for every sub-additive function
  $V \: \R^d\to [1,\infty)$ (that is, satisfying for all $x, y \in \R^d$
  $V(x+y) \le C(V(x)+V(y))$ for some $C > 0$ )
  it holds that
  $$\norm{J*u}_V \le 2C\norm{J}_V\norm{u}_V$$
  for all measurable $u \: \R^d \to \R$ such that $\|u\|_V <
  +\infty$. In particular, for $V(x) := 1 + |x|^k$ with $k \geq 0$ we have
  \begin{equation}
    \label{eq:Lk-Young}
  \norm{J*u}_{L^1_k} \le 2^{k} \norm{J}_{L^1_k} \norm{u}_{L^1_k}.
  \end{equation}
\end{lemma}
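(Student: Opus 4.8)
The plan is to reduce the whole estimate to a single application of Tonelli's theorem, after a pointwise triangle–inequality bound that makes the integrand nonnegative, so that no a priori integrability is needed to interchange integrals. First, for a.e.\ $x \in \R^d$,
\[
  |J*u(x)| \le \ird |J(x-y)|\,|u(y)| \dy,
\]
and hence, multiplying by $V(x)$ and integrating,
\[
  \norm{J*u}_V \le \ird \ird V(x)\,|J(x-y)|\,|u(y)| \dy \dx .
\]
All integrands from here on are nonnegative, so Tonelli's theorem licenses every interchange of order of integration below (and makes the hypothesis $\norm{J}_V\norm{u}_V<\infty$ relevant only to ensure the bound is not vacuous).

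Next I would insert the sub-additivity bound $V(x) = V\big((x-y)+y\big) \le C\big(V(x-y)+V(y)\big)$ and split the double integral into two pieces. In the first piece the factor $V(x-y)$ pairs with $|J(x-y)|$; freezing $y$, the substitution $z=x-y$ turns the inner integral into $\ird V(z)|J(z)|\d z = \norm{J}_V$, so this piece equals $C\,\norm{J}_V \ird |u(y)|\dy \le C\,\norm{J}_V\,\norm{u}_V$, using $V\ge 1$ in the last step. In the second piece the inner integral is $\ird |J(x-y)|\dx = \norm{J}_{L^1} \le \norm{J}_V$, so this piece is at most $C\,\norm{J}_V \ird V(y)|u(y)|\dy = C\,\norm{J}_V\,\norm{u}_V$. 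Adding the two gives $\norm{J*u}_V \le 2C\,\norm{J}_V\,\norm{u}_V$, the claimed inequality.

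For the particular case $V(x)=1+|x|^k$ it remains only to record the sub-additivity constant. Convexity of $t\mapsto t^k$ for $k\ge 1$ gives $(a+b)^k \le 2^{k-1}(a^k+b^k)$ for $a,b\ge 0$, while for $0\le k\le 1$ the map $t\mapsto t^k$ is itself sub-additive, so $(a+b)^k\le a^k+b^k$. In either case $|x+y|^k \le 2^{\max(k-1,0)}\big(|x|^k+|y|^k\big)$, and since $1\le 2^{1+\max(k-1,0)}$ this yields $V(x+y)\le 2^{\max(k-1,0)}\big(V(x)+V(y)\big)$; taking $C=2^{\max(k-1,0)}$ in the general estimate gives $\norm{J*u}_{L^1_k}\le 2^{\,1+\max(k-1,0)}\norm{J}_{L^1_k}\norm{u}_{L^1_k}$, i.e.\ the stated factor $2^{k}$ when $k\ge 1$.

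I do not expect a genuine obstacle here: the statement is essentially bookkeeping. The only two points needing the slightest care are that one should pass to the nonnegative integrand $V(x)|J(x-y)||u(y)|$ \emph{before} invoking Fubini--Tonelli, so that the interchange needs no finiteness assumption, and that the factor $2$ in front of $C$ is intrinsic — it is the cost of estimating the weight at $x$ by the weights at $x-y$ and at $y$ separately, rather than distributing the mass.
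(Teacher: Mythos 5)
Your proof is correct and follows essentially the same route as the paper's: bound $|J*u|$ by the triangle inequality so the integrand is nonnegative, apply Tonelli, split via sub-additivity of $V$, and then use $V\ge 1$ to absorb the $L^1$ norms into $\norm{\cdot}_V$. Your extra care for $0\le k<1$ is well placed — the paper's proof invokes convexity of $|\cdot|^k$, which only holds for $k\ge 1$, so as written the argument (yours and theirs) yields the stated constant $2^k$ only for $k\ge 1$; for $k<1$ one gets the slightly weaker $2$, or one can use submultiplicativity $V(x+y)\le V(x)V(y)$ to get constant $1$, either of which suffices for the paper's purposes.
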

We also notice that for any $t \geq 0$, the operator $T_t$ is
contractive in all $L^1_k$ for any $k \geq 0$:
\begin{lemma}[Contractivity]
  \label{lem:St_contractive}
  For all $k\geq 0$, $t>0$ and $u \in L^1_k$, the operator $T_t$
  defined in \eqref{eq:St} satisfies
   \begin{equation*}
    \| T_t u \|_{L^1_k} \le e^{-t/\e^2}\| u \|_{L^1_k}.
  \end{equation*}
\end{lemma}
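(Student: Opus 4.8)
The plan is a one-line change of variables. By the definition of the $L^1_k$ norm and of $T_t$ in \eqref{eq:St},
\[
\norm{T_t u}_{L^1_k}
= e^{(d-1/\e^2)t}\ird \big(1 + |x|^k\big)\,|u(e^t x)|\dx .
\]
The key step is the substitution $y = e^t x$, under which $\dx = e^{-dt}\dy$ and $|x|^k = e^{-kt}|y|^k$; the prefactor collapses, since $e^{(d-1/\e^2)t}e^{-dt} = e^{-t/\e^2}$, so that
\[
\norm{T_t u}_{L^1_k}
= e^{-t/\e^2}\ird \big(1 + e^{-kt}|y|^k\big)\,|u(y)|\dy .
\]

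To conclude, I would use that $t > 0$ and $k \ge 0$ force $e^{-kt} \le 1$, hence $1 + e^{-kt}|y|^k \le 1 + |y|^k$ pointwise; inserting this bound gives $\norm{T_t u}_{L^1_k} \le e^{-t/\e^2}\norm{u}_{L^1_k}$. (For $t=0$ one gets equality; the inequality is genuine for $t>0$ precisely because the dilation contracts the weight at large $|y|$.) The same argument goes through verbatim with the equivalent weight $\ap{y}^k$.

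There is essentially no obstacle here. The only minor point worth care is that $u$ is assumed merely to lie in $L^1_k$, not to be differentiable, so instead of a classical change-of-variables formula one should justify the computation by Tonelli's theorem applied to the nonnegative integrand $x\mapsto (1+|x|^k)|u(e^t x)|$; this simultaneously shows that $T_t u \in L^1_k$ whenever $u\in L^1_k$, so the operator is well defined on $L^1_k$.
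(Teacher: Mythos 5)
Your proof is correct and follows the same argument as the paper: write out the weighted $L^1$ norm, change variables $y=e^t x$ to collapse the prefactor $e^{(d-1/\e^2)t}e^{-dt}=e^{-t/\e^2}$, and then use $e^{-kt}\le 1$ to absorb the modified weight. The remark about Tonelli as a justification for the change of variables for merely integrable $u$ is a reasonable extra precision, but the substance matches the paper's proof.
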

\begin{proof}
  We just write the following direct estimate:
  \begin{multline*}
      \| T_tu \|_{L^1_k} =
      \int (|x|^k+1) |T_t u(x)| \d x
      =
      e^{-t/\e^2}\int(|x|^k+1)|u(xe^t)|e^{d t} \d x
      \\
      =
      e^{-t/\e^2}\int (|e^{-t}y|^k+1)|u(y)| \d y
      <
      \int (|y|^k+1)u(y) \d y.
  \end{multline*}
\end{proof}%
\begin{proof}[Proof of Theorem \ref{thm:wellposed}]
  This is done via a standard contraction argument.  Take $T^* > 0$
  and define the Banach space
  $\mathcal{Y} := \mathcal{C}([0,T^*],L^1_k)$ equipped with the norm
  $$\norm{u}_{\mathcal{Y}}:= \sup_{t\in [0,T^*]}\norm{u(t)}_{L_k^1}.$$
  Let us define $\Psi:\mathcal{Y}\rightarrow\mathcal{Y}$ by
  \begin{equation*}
    \Psi[u](t):=T_tu_0+\frac{1}{\e^2}\int_0^t T_{t-s}\big[ J_\e*u(s)\big] \d s    
  \end{equation*}
  for any $u \in \mathcal{Y}$, so that $u$ is a mild solution if and
  only if $u$ is a fixed point for $\Psi$. In the following estimate $u_s$ and $v_s$ will be used as shorthand for $u(s)$ and $v(s)$. For every $u_t,v_t\in
  L^1_k$, use Lemmas \ref{lem:weighted_young} and
  \ref{lem:St_contractive} we have 
  \begin{equation*}
    \begin{split}
      \|\Psi[u](t)&-\Psi[v](t)\|_{L^1_k}
      =
      \left\|
      \frac{1}{\e^2}\int_{0}^tT_{t-s}\big[J_\e*(u_s-v_s) \big] \d s
      \right\|_{L^1_k}
      \\
      & \le 
      \frac{1}{\e^2}\int_{0}^t \left\| T_{t-s} \big[\big|J_\e*(u_s-v_s)\big|
        \big] \right\|_{L^1_k} \d s
        \\
      &\le
      \frac{1}{\e^2}\int_{0}^t e^{-\frac{t-s}{\e^2} }\left\|J_\e*(u_s-v_s)
      \right\|_{L^1_k} \d s
    \\
      &\le\frac{1}{\e^2} c_k \int_{0}^t \|u_s-v_s\|_{L_k^1} \d s 
      \\
      &\le \frac{1}{\e^2}c_k t \,\, \|u-v\|_{\mathcal{Y}},
    \end{split}
   \end{equation*}
   where $c_k = 2^k\norm{J}_{L^1_k} \geq 2^k\norm{J_\e}_{L^1_k}$ 
   from Lemma \ref{lem:weighted_young}.
   Iterating the process, we obtain
   \begin{equation*}
       \|\Psi^n[u](t)-\Psi^n[v](t)\|_{L^1_k}
     \le
      \Big(\frac{c_k t}{\e^2}\Big)^n\frac{1}{n!}\|u-v\|_{\mathcal{Y}}\le \Big(\frac{c_k T^*}{\e^2}\Big)^n\frac{1}{n!}\|u-v\|_{\mathcal{Y}}
   \end{equation*}
and then, taking the supremum, leads to   
   \begin{equation*}
   \begin{split}
     \|\Psi^n[u](t)-\Psi^n[v](t)\|_{\mathcal{Y}}
     \le
      \Big(\frac{c_k T^*}{\e^2}\Big)^n\frac{1}{n!}\|u-v\|_{\mathcal{Y}}
      \end{split}
   \end{equation*}
   For every $T^*$, there exists $n\in \mathbb{N}$ such that
    $\big(\frac{c_k T^*}{\e^2}\big)^n\frac{1}{n!}< 1$. 
   There is a specific version of the contraction mapping theorem (see
   for example \citet[Thm. 1.1.3]{miklavcic_applied_1998}) which now
   implies $\Psi$ has a fixed point which is a solution of
   \eqref{NLFP} in the sense of Definition \ref{dfn:mild}.
   (Alternatively, one can consider the sequence
   $(\Psi^n(u_0))_{n \geq 0}$, where $u_0 \in \mathcal{Y}$ is the function
   constantly equal to $u_0$ on $[0,T^*]$; our last inequality easily
   shows that it is a Cauchy sequence, and hence must have a
   limit. This can be done for any $T^* > 0$, easily obtaining the
   result on $[0,+\infty)$.)

   To prove continuity with respect to initial data we proceed as
   before: let $v$ be another mild solution for some other initial
   data $v_0$, then for every $t$
    \begin{equation*}\begin{split}
       &\norm{u(t)-v(t)}_{L^1_k}\le \norm{T_t(u_0-v_0)}_{L^1_k}
       +\frac{1}{\e^2}\int_0^t\norm{
         T_{t-s}[J_\e*(u_s-v_s)]}_{L^1_k} \d s
       \\
       &\le e^{-t/\e^2}
       \norm{u_0-v_0}_{L^1_k}
       +\frac{c_k}{\e^2}\int_0^t
       e^{-\frac{t-s}{\e^2}}\norm{u(s)-v(s)}_{L^1_k}  \d s.
     \end{split}
   \end{equation*}
   By the standard Gronwall lemma applied to
   $e^{\frac{t}{\e^2}} \norm{u(t)-v(t)}_{L^1_k}$ we obtain the result.
 \end{proof}

\subsubsection{Existence of measure solutions}

We sketch similar arguments that give existence of solutions in a
space of measures, since  Harris's theorem we use later is usually
stated for semigroups in a space of measures (or Markov semigroups in
the context of probability). This is really not strictly necessary,
since there are versions of the Harris theorem which hold similarly in
weighted $L^1$ spaces (see \cite{canizo_harris-type_2021}), but since
the proof is very similar we give the main steps.

Let us denote with $\M(\R^d)$ the space of signed finite Radon
measures over $\R^d$ and by $\norm{\cdot}_{TV}$ and
$\norm{\cdot}_{\M_k}$ the total variation norm and the weighted total
variation, respectively. Let us recall the definition of Bounded
Lipschitz norm, i.e.
\begin{equation*}
   \norm{\mu}_{BL}:=\sup_{f\in\mathcal{F}}\abs{\ird f\d\mu}
\end{equation*}
where
$\mathcal{F}:=\{f \in W^{1,\infty}(\R^d):\norm{f}_{1,\infty}\le 1\}$,
with $\norm{f}_{1,\infty} := \|f\|_\infty + \Lip(f)$ and $\Lip(f)$
denoting the Lipschitz constant of $f$.
 In the literature,  the bounded Lipschitz norm is sometimes defined as the equivalent norm obtained by choosing in the previous definition
$\mathcal{F}:=\{ f:\R^d \to\R:\norm{f}_\infty\le1, \sup_{x\ne
  y}\frac{\abs{f(x)-f(y)}}{|x-y|}\le 1 \}$.

We define the ball $\mathcal{X}_R:=\{\mu\in\M:\norm{\mu}_{\M_k}\le R\}$ and the
set $\mathcal{Z}=\mathcal{C}([0,T^*], \mathcal{X}_R)$ of functions which are continuous
from $[0,T^*]$ to $\mathcal{X}_R$ in the bounded Lipschitz norm. $\mathcal{Z}$
is a complete metric space with the metric induced by the norm
$$\norm{\mu}_{\mathcal{Z}}= \sup_{t\in[0,T^*]} \norm{\mu(t)}_{BL}.$$
We stress that the same claim in the space $\mathcal{C}([0,\infty), \M_k)$ does
not hold since in $(W^{1,\infty})^*$ there are elements that are not
measures which can be approximated by Cauchy sequences in $\M_k$.

Following the path of the previous proof, we can prove existence and
uniqueness of the mild solution of equation \eqref{NLFP} in
$\mathcal{Z}$ choosing a suitable $R$ such that $\mu_0 \in \mathcal{X}_R$. One
can prove a version of inequality (\ref{eq:Lk-Young}) in weighted
total variation norms:
\begin{equation*}
  \norm{J*\mu}_{\M_k}\le c_k \norm{\mu}_{\M_k}
\end{equation*}
Moreover, the contractivity Lemma \ref{lem:St_contractive} still holds
in the sense of measures in weighted total variation norm.
 The functional $\Psi$, defined as \begin{equation*}
        \Psi[\mu](t)= T_t\mu_0+\frac{1}{\e^2}  \int_0^tT_{t-s}[J_\e*\mu_s]\d s
     \end{equation*}
     is such that if $t\to\mu(t)\in \mathcal{Z},$ then so is
     $t\to\Psi[\mu](t)$ for small enough $T^*$. We briefly show a
     proof of this.  Continuity comes straightforwardly from the fact
     that $T_t$ is a continuous semigroup (in the bounded Lipschitz
     norm) and all the other operations are continuity-preserving.
     
     We have left to show that $\Psi[\mu](t)\in \mathcal{X}_R$ for every
     $t\in[0,T^*]$ when $\mu(t)\in\mathcal{X}_R$. Set $R=2\norm{\mu_0}_{\M_k}$
\begin{equation*}
\begin{split}
   &\norm{\Psi[\mu](t)}_{\M_k}\le\norm{T_t\mu_0}_{\M_k}+\frac{1}{\e^2}\int_0^t\norm{T_{t-s}[J_\e*\mu_s]\d s}_{\M_k}\\
    &\le \norm{\mu_0}_{\M_k}+\frac{1}{\e^2}\int_0^tC_k\norm{\mu_s}_{\M_k}\d s\le\norm{\mu_0}_{\M_k}+\frac{1}{\e^2}CT^*\norm{\mu}_{\mathcal{\M}_k}\le R
    \end{split}
\end{equation*}
for $T^*\le\frac{\e^2}{2C_k}$. We then iterate the process for another time $T^*_1$ until we extend the existence and uniqueness globally.

With the same approach, one can prove the previous results in the space $L^1_{\exp,a}$. Since the procedure is the same \textit{mutatis mutandis}, we have chosen to omit it.
\subsection{Semigroup results}

Our result on the well-posedness of the nonlocal Fokker-Planck
equation \eqref{NLFP} can also be phrased in terms of semigroup
theory. For an introduction and further details on the results we will
be using below, we refer the reader to the books \cite{EngelNagel2001}
or \cite{pazy_semigroups_1983}. We first point out that for any
$k \geq 0$, the operators $(T_t)_{t \geq 0}$ given in \eqref{eq:St}
define a strongly continuous semigroup on the Banach space
$L^1_k$; it is easy to check this directly. The generator of
this semigroup is the operator $B_\e$ given by
\begin{equation*}
  B_\e u := \div(x u)-\frac{1}{\e^2}u
\end{equation*}
with domain
\begin{equation*}
  \mathcal{D}(B_\e) := \{ u \in L^1_k \mid \div(x u)\in L^1_k \},
\end{equation*}
which is a closed linear operator.
Then,  equation  \eqref{NLFP} can be written as
\begin{equation}
  \label{nlfp_S}
  \ddt u = L_\e u = A_\e u + B_\e u,
\end{equation}
where $L_\e$ and $A_\e$ denote the operators
\begin{gather*}
  A_\e u := \frac{1}{\e^2} J_\e*u ,
  \\
  L_\e u := A_\e u + B_\e u
  = \frac{1}{\e^2} \left( J_\e*u - u \right) +
  \div(x u).
\end{gather*}
Due to Lemma \ref{lem:weighted_young} we know that
$A_\e \: L^1_k \to L^1_k$ is a bounded linear
operator. Since we know $B_\e$ is a generator in $L^1_k$, standard
results for bounded perturbations in semigroup theory (see for example
\citet[chapter III.1]{EngelNagel2001}) ensure then that the operator
$L_\e=A_\e + B_\e$ is also a generator with domain $\mathcal{D}(L_\e)=\mathcal{D}(B_\e)$. This is equivalent to the
well-posedness of equation \eqref{nlfp_S} in the sense of semigroups
\citep[Chapter II.6]{EngelNagel2001}. Solutions to \eqref{nlfp_S}
in the sense of semigroups must also be mild solutions in the sense of
Definition \ref{dfn:mild} \citep[Section III.1.3, Corollary
1.4]{EngelNagel2001}, so they coincide with the solutions given in
Theorem \ref{thm:wellposed}.
\subsection{Representation of the solution}
\label{sec:representation}
For this kind of problems, we can express the solution via the classic
Fourier transform or with the so-called Wild sum. We use the latter as
an important tool in our later proofs, emphasizing that our main
results do not seem easy to obtain by using the explicit
representation in the terms of the Fourier transform. More
importantly, we would like to explore strategies which would be valid
even in cases in which Fourier transform methods are not
available. 

For the sake of completeness and to obtain regularity results
  in section \ref{sec:regularity}, we will also briefly comment on
the Fourier solution. From
\eqref{eq:J-moments-basic} we deduce that
\begin{equation*}\label{eq: Jfourier}
  \hat{J}(\xi)=1-\xi^2+o(|\xi|^2)
  \qquad \text{as $\xi \to 0$.}
\end{equation*}

Then if we take $u_0$ as an initial probability distribution
  in $L^1$, we can prove that there exists at most one solution of \eqref{NLFP} in
  $\mathcal{C}([0,\infty),L^1)$. If it exists, it must satisfy
\begin{equation}\label{eq: solfourier}
    \hat{u}(t,\xi)=\hat{u}_0(e^{-t}\xi)e^{\frac{1}{\e^2}\int_0^t\zeta_\e(e^{-s}\xi)\d s}
\end{equation}    
with $\zeta_\e(\xi)=\hat{J}_\e(\xi)-1$.  Moreover, there exists at
most one equilibrium in $L^1$ of \eqref{NLFP}, and it must satisfy
\begin{equation}\label{eq: eqfourier}
  \hat{F}_\e(\xi)=e^{\frac{1}{\e^2}\int_0^\infty\zeta_\e(e^{-s}\xi)\d s}
\end{equation}
 Indeed, passing to Fourier variables, equation \eqref{NLFP} becomes
\begin{equation*}
    \partial_t\hat{u}=\frac{1}{\e^2}\big(\hat{J}_\e\hat{u}-\hat{u}\big)-\xi\cdot \nabla_\xi\hat{u}=\frac{1}{\e^2}\zeta_\e(\xi)\hat{u}-\xi\cdot \nabla_\xi\hat{u}
\end{equation*}
which has equation \eqref{eq: solfourier} as its unique solution.  We
also notice that the transform of the equilibrium must satisfy
 \begin{equation*}
       \hat{F_\e}(\xi)= \hat{u}(r,\xi)=\hat{F}_\e(e^{-r}\xi)e^{\frac{1}{\e^2}\int_0^r\zeta_\e(e^{-s}\xi)\d s}\quad\text{ for every $r\ge 0$}
   \end{equation*}
   with the initial condition  $\hat{F}_\e(0)=1$, since $\norm{F_\e}_{L^1}=1$.
   In particular
   \begin{equation*}
     \hat{F}_\e(\xi)=\lim_{r\to\infty}\hat{F}_\e(e^{-r}\xi)e^{\frac{1}{\e^2}\int_0^r\zeta_\e(e^{-s}\xi)\d
       s}=e^{\frac{1}{\e^2}\int_0^\infty\zeta_\e(e^{-s}\xi)\d s}.
     \qedhere
   \end{equation*}

\begin{remark}
  With a more careful study of these arguments or other classical ones
  (like Tychonoff's Theorem) one could prove existence of a solution
  and of the equilibrium. We don't want to insist on these details
  since in the next section we will give a different proof for the
  existence and uniqueness of the equilibrium.
\end{remark}

\paragraph{Wild Sum}

When dealing with linear convolution-type operators, Wild Sum---or
Dyson-Phillips 's Series \cite[p. 163 ]{EngelNagel2001}---give a
particularly simple way to express the solution, based on a ``Picard
iteration'' for PDEs.

We first present the following lemma that, for a fixed $t$, allows us
to exchange the order of the operator $T_t$ with the convolution
operator and vice versa.
\begin{lemma}\label{interchange}
Let $(T_t)$ be the semigroup associated to the solution of equation \eqref{eq:St}. Then
\begin{equation*}
    J_\e*(T_tf)=T_t[J_{\e e^t}*f]
\end{equation*}
\end{lemma}
\begin{proof}
  For every $x$ 
  \begin{multline}
            \big(J_\e*T_tf\big)(x) =\int\e^{-d}J\Big(\frac{x-y}{\e}\Big)(T_tf)(y) \d y=e^{(d-\frac{1}{\e^2})t}\int \e^{-d}J\Big(\frac{x-y}{\e}\Big)f(e^ty) \d y\\
            =e^{-\frac{t}{\e^2}}\int\e^{-d} J\Big(\frac{e^{-t}}{\e}(e^tx-z)\Big)f(z)\d z\\
            =\int e^{(d-\frac{1}{\e^2})t}\big(e^{-d t}\e^{-d}\big)J\Big(\frac{e^tx-z}{e^t\e}\Big)f(z)\d z=e^{(d-\frac{1}{\e^2})t}\big(J_{\e e^t}*f\big)(e^tx)
    \end{multline}
    where we have used the change of variable
    $z=e^ty$. Hence,
    \begin{equation*}
    J_\e*T_tf=T_t[J_{\e e^t}*f]. \qedhere
\end{equation*}
\end{proof}

\begin{theorem}\label{Thm:wild}
  Let $u$ be the unique solution to equation \eqref{NLFP} with initial condition $u_0\in L^1$.  Then, in
  the hypotheses of Theorem \ref{thm:wellposed}, we can write the
  solution of the equation via Wild sum, namely for every $t\ge 0$
  
  \begin{equation*}
    u(t)(x)
    =
    e^{(d-\frac{1}{\e^2})t}\Big[u_0(e^tx)
    +\sum_{n=1}^\infty \Big(\frac{1}{\e^2}\Big)^n
    \int_0^t\int_0^{t_{1}}\dots\int_0^{t_{n-1}}\Jn*u_0(e^tx)
    \d t_n\dots \d t_1.
    \Big]
\end{equation*}
where the convergence in the RHS is understood to be in $L^1$ for every $t$, and
\begin{equation*}\label{jn}
    \Jn:=J_{\e e^{t_1}}*\cdots*J_{\e e^{t_n}}.
\end{equation*}
for $0\le t_1\le\dots\le t_n\le t$.
\end{theorem}

\begin{proof}
Define \begin{equation*}
\begin{split}
    h(t):=T_{-t}u(t)&=T_{-t}\Big[ T_tu_0+\int_0^t T_{t-s}[\frac{1}{\e^2}J_\e*u(s)]\d s \Big]\\
    &= u_0+\frac{1}{\e^2}\int_0^t T_{-s}[J_\e*u(s)]
    \end{split}
\end{equation*}
Using Lemma \ref{interchange} 
\begin{equation*}
        T_{-s}[J_\e*u_s]=T_{-s}[J_\e *T_s(T_{-s}u)]=T_{-s}[T_sJ_{\e e^s}*h(s)]=J_{\e e^s}*h(s)
\end{equation*}
This means that $ u$ is a mild solution for Equation \eqref{NLFP} iff
$$h(t)=u_0+\frac{1}{\e^2}\int_0^t J_{\e e^s}*T_{-s}u(s)\d s=u_0+\frac{1}{\e^2}\int_0^t J_{\e e^s}*h(s)\d s,$$ that is 
if $h$ is a fixed point for the functional  $\Psi$
\begin{equation*}
    \Psi(g):=u_0+\frac{1}{\e^2}\int_0^t J_{\e e^s}*g_s\d s
\end{equation*}
To find an expression for this fixed point we perform a Picard Iteration defining        $h_0=u_0$ and 
\begin{equation}\label{picard}
         h_{n+1}=\Psi(h_{n})
         \end{equation}
         i.e.
         $$h_{n}(t) = h_{n-1}(t) + \frac{1}{\e^{2n}}\int_0^t\int_0^{t_{1}}\dots\int_0^{t_{n-1}} J_\e^{t_1,\dots,t_n}*u_0\d t_n\dots \d t_1.$$
Clearly the sequence $(h_n)_n$ is pointwise monotone increasing. Since
{the functions $h_n$ are all nonnegative and bounded
  above by $h$}, the monotone convergence theorem shows that $h_n$
converges in $L^1$ to some function $\tilde{h} = \tilde{h}(t)\ge0$ when
$n\rightarrow \infty$, namely
\begin{equation*}
    \tilde{h}(t) = u_0+\sum_{n=1}^\infty \Big(\frac{1}{\e^2}\Big)^n\int_0^t\int_0^{t_{1}}\dots\int_0^{t_{n-1}} J_\e^{t_1,\dots,t_n}*u_0\d t_n\dots \d t_1.
\end{equation*}
Passing to the limit in \eqref{picard}, one obtains that $\tilde{h}$
is a fixed point for the functional $\Psi$, so in fact
$\tilde{h} = h$. Therefore,
\begin{align*}
  &u(t)(x)= \big(T_th\big)(x)=e^{(d-1/\e^2)t}u_0(e^tx)+
  \\
  &+e^{(d-1/\e^2)t}\sum_{n=1}^\infty
    \e^{-2n}\int_0^t\int_0^{t_{1}}\dots\int_0^{t_{n-1}}\Big(J_\e^{t_1,\dots,t_n}*u_0\Big)(e^tx)
    \d t_n\dots \d t_1.
    \qedhere
\end{align*} 
\end{proof}

\begin{remark}\label{rmk: propinh}
  From the previous it  is immediate to see that if $u_0$ and $J$ are
  radially symmetric functions then also the solution is radially
  symmetric. One can also say this of any property which is preserved
  by convolution with $J$. Analogously, the equilibrium will preserve
  the same properties.
\end{remark}

\section{Asymptotic behavior}
\label{sec:asymptotic-behavior}

In this section we study the asymptotic behavior of equation
\eqref{NLFP}. For this we use Harris's theorem from probability,
establishing exponential convergence to equilibrium of solutions in
$\M_k$ for $k > 0$. Our crucial advantage of our approach is that it
yields a uniform bound on the convergence rate to equilibrium for all
$\e \in (0,1]$.

\subsection{Harris's Theorem }\label{harristheory}

Let us first give a short overview of Harris's theorem, in order to
make this paper as self-contained as possible. We refer again to
\cite{Hairer2011, canizo_harris-type_2021} and the book by
\cite{Meyn2010} for a more complete exposition.

Let us denote by $\M \equiv \M(\Omega)$, the space of finite signed
measures over a measurable space $\Omega$, with
$\norm{\mu}=\norm{\mu}_{TV} = \int |\mu|$, the total variation
norm. $\mathcal{P}$ will denote the set of probability measures in
$\M$. If $V:\Omega\rightarrow [1,\infty)$ is a measurable function
(which we call a \emph{weight function}), we denote by $\M_V$ the
subspace of measures $\mu \in \M$ such that the norm
$\norm{\mu}_{V}=\int V|\mu|<\infty$. We will use $\mathcal{P}_V$ to
denote $\M_V\cap\mathcal{P}$.

A linear operator $S:\M\to\M$ is \emph{stochastic} if it leaves $\mathcal{P}$
invariant. A stochastic semigroup $(S_t)_{t\ge0}$ is a family of
stochastic operators $S_t$ parameterised by $t$ such that $S_0=\Id$
and $S_t\circ S_s=S_{t+s}$ for all $t, s \geq 0$.

We say that $S$ is a stochastic operator on $\M_V$ if it is a
stochastic operator on $\M$ and its restriction on $\M_V$ is bounded
in the $\norm{\cdot}_V$ norm. Similarly, $S_t$ is a stochastic
semigroup on $\M_V$ if it is a stochastic semigroup on $\M$ and each
$S_t$ is a stochastic operator on $\M_V$.

A basic tool in the study of the asymptotic behavior of Markov
Processes is Doeblin's Theorem that under a (strong) positivity
hypothesis gives existence of a unique stationary state, which is
additionally exponentially stable.  A well known generalization of it
is Harris's Theorem where Doeblin's condition is replaced by a weaker
local positivity condition and a Lyapunov confinement, which we state
next.

\begin{hyp}[Semigroup Lyapunov condition]
  \label{ass: semigroupLyap}
  Let $V \: \Omega\to[1,\infty)$ be a measurable (``weight'') function
  and let $(S_t)_{t\ge0}$ be a stochastic semigroup on $\M_V$. We say
  that $(S_t)_{t\ge 0}$ satisfies a semigroup Lyapunov condition if
  there exist constants $\lambda$, $C>0$ such that
  \begin{equation}
    \label{eq: semigroupLyap}
    \tag{SLC}
    \norm{S_t \mu}_{V}\le e^{-\lambda t}\norm{\mu}_V+\frac{C}{\lambda}
    (1-e^{-\lambda t})\norm{\mu}.
  \end{equation}
\end{hyp}
If $S_t$ is a semigroup with generator $L$, a natural and better known
version of this condition is to impose
\begin{equation}
  \label{eq:sLyap}
  L^*V\le C-\lambda V
\end{equation}
where $L^*$ is the (formal) dual operator of $L$ acting on ``nice
enough'' functions (see for example \cite{hairer2010convergence},
where inequality (\ref{eq:sLyap}) must be understood in a
submartingale sense). We show condition (\ref{eq:sLyap}), in a
suitable sense, for the generator associated to equation (\ref{NLFP})
in Theorems \ref{thm:lyapunovpow} and \ref{thm:lyapunovexp}. From this
we will deduce Hypothesis \ref{ass: semigroupLyap}.

The second condition needed is the \emph{Harris condition}:

\begin{hyp}[Harris condition]
\label{HLB}
A stochastic operator $S$ on $\M$ satisfies a Harris condition on a
set $\mathcal{S} \subseteq \Omega$ if there exist $0<\alpha<1$ and
$\rho \in \mathcal{P}$ such that
\begin{equation}\label{eq:HLB}\tag{HC}
    S\mu\ge \alpha\rho\int_{\mathcal{S}}\mu\quad \text{ for every } 0\le\mu\in\M.
\end{equation}
\end{hyp}

We will use the following version of Harris's Theorem:

\begin{theorem}[Harris's theorem for semigroups.]
  \label{sHarris}
  Let $V:\Omega\to[1,\infty)$ be a measurable function and let
  $(S_t)_{t\ge0}$ be a stochastic semigroup on $\M_V$. Assume that:
  \begin{enumerate}
  \item the
    semigroup $(S_t)_{t \geq 0}$ satisfies the semigroup Lyapunov
    condition \eqref{eq: semigroupLyap}, and
    
  \item for some $T^* > 0$ and some $R>\frac{2C}{\lambda}$, the
    operator $S_{T^*}$ satisfies the Harris condition \eqref{eq:HLB}
    on the set $\mathcal{S}:=\{x:V(x)\le R \}$.
  \end{enumerate}
  Then the semigroup has an invariant probability $\mu^*\in\mathcal{P}_V$
  which is unique in $\mathcal{P}_V$, and there exist $C,\lambda>0$ such that
  \begin{equation*}
    \norm{S_t\nu}_V\le Ce^{-\lambda t}\norm{\nu}_V
    \quad\text { for all $\nu\in\mathcal{P}_V$ with $\int_\Omega \nu =
      0$ and all $t\ge0$},
  \end{equation*}
  and in particular
  \begin{equation*}
    \norm{S_t\mu-\mu^*}_V\le Ce^{-\lambda t}\norm{\mu-\mu^*}_V
    \quad\text { for all }\mu \in \mathcal{P}_V \text{ and }t\ge0.
  \end{equation*}
\end{theorem}

Short and elementary proofs of Harris's theorem are available \citep{Hairer2011, canizo_harris-type_2021}. The theorem just makes rigorous the fact that a uniform positivity condition plus a confinement condition imply exponential convergence to equilibrium. The central part of any arguments using Harris's theorem must then be contained in the proof of these conditions, and usually (like in our case) in the proof of the positivity condition.

\medskip Theorem \ref{spectral gap} automatically follows once we
prove the two conditions of Harris's Theorem with weight
$V(x) = \ap{x}^k$. We notice that, since solutions to (\ref{NLFP})
with $L^1_k$ initial data remain in $L^1_k$ for all times (and the
$\M_k$ norm in $L^1_k$ is just equal to the $L^1_k$ norm), Theorem
\ref{spectral gap} in $L^1_k$ really follows once we are able to apply
Theorem \ref{sHarris} in $\M_k$. A similar consideration holds with
exponential weights. Hence, the next two sections are devoted to
proving the hypotheses required in Theorem \ref{sHarris} for the
semigroup generated by equation (\ref{NLFP}).

\subsection{Lyapunov condition}

We define $L_\e^*$, the formal dual operator of $L_\e$, acting on
functions $\f\in \mathcal{C}^2$ (with appropriate growth as $|x| \to +\infty$)
as
\begin{equation*}\label{eq: dualL}
  L_\e^*\f:=A^*_\e\f+B_\e^*\f=\frac{1}{\e^2}\ird J_\e(y)(\f(x+y)-\f(x))\d y-x\nabla \f.
\end{equation*}
This ``appropriate growth'' is just to ensure the convolution
$J_\e * \varphi$ is properly defined. Let $(S_t)_{t\ge0}$ be the
semigroup associated to equation \eqref{NLFP}.

\begin{theorem}\label{thm:lyapunovpow}
  Let $J$ satisfy \eqref{eq:J-moments-basic} and assume $J\in L^1_{k'}$
  for some $k'\ge 2$. Then for every $0 < k \le k'$ there exist constants
  $C, \lambda>0$ depending only on $k$ and $J$ such that
  $$L_\e^*(\ap{x}^k)\le C-\lambda \ap{x}^k$$
  for any $0 < \e \leq 1$.
\end{theorem}

\begin{proof}
  To lighten the notation in the following, we will use $C_1, C_2$ as
  general constants, whose value may change from line to line.  Recall
  from \eqref{eq:J-moments-basic}, that for all $i=1,\dots,d$
	\begin{equation*}
		\int J(z) z_i\d z=0.
	\end{equation*}
	Therefore, for every $\f\in \mathcal{C}^{\infty}$, expanding around $x$,
	\begin{equation*}\begin{split}
			\frac{1}{\e^2}\ird J_\e(y)(\f(x+y)-\f(x))\d y&=\frac{1}{\e^2}\ird J(z)(\f(x+\e z)-\f(x))\d z\\
			&=\frac{1}{\e^2}\ird J(z)\Big(\f (x)+\frac{1}{2}\sum_{|\alpha|=2}R_\alpha(\xi)\e^2 z^{\alpha}-\f(x)\Big)\d z\\ 
                &=\frac{1}{2}\ird J(z) \sum_{i=1}^d \partial_i^2\f(\xi)z_i^{2}\d z
		\end{split}
              \end{equation*}
        where $R_\alpha(\xi)$ is the Lagrange reminder for some
        $\xi(x,z)\in$ $[x,x+\e z]$, the segment that joins $x$ and
        $x+\e z$.

  Fix now $\f(x)=\ap{x}^k$. With a simple computation 
  \begin{equation}\label{eq: 2deriv}
    \partial_i^2 \ap{x}^k = k\big(\ap{x}^{k-2}+(k-2)x_i^2\ap{x}^{k-4}\big).
  \end{equation}
  For $k\in(0,2)$ we have $\partial_i^2\f(\xi)\le k$, for every
  $i,\xi$ . Hence
  $$
  \frac{1}{\e^2}\ird J_\e(y)(\ap{x+y}^k-\ap{x}^k)\d y\le\frac{k}{2}\ird J(z) |z|^2\d z=k.
  $$
  For $k\ge 2$, since $\ap{\cdot}^j$ is a radially nondecreasing
  \textit{subadditive} function for every $j\ge 0$, we can bound
  \eqref{eq: 2deriv} by
 \begin{equation*}
 \begin{split}
     \partial_i^2\f(\xi)&\le k(k-1)\langle \xi\rangle^{k-2}
     \le k(k-1)\ap{|x|+\e |z|}_{k-2}\le C_1(\ap{x}^{k-2}+\ap{z}^{k-2}).
     \end{split}
 \end{equation*}
 It follows that 
 \begin{equation*}
     \begin{split}
        \frac{1}{\e^2}&\ird J_\e(y)(\ap{x+y}^k-\ap{x}^k)\d y
         \le\frac{1}{2}\ird J(z) \sum_{i=1}^d z_i^2 C_1(\langle x\rangle^{k-2}+\langle z \rangle^{k-2}) \d z\\
         &\le C_1\langle x\rangle^{k-2}\frac{1}{2}\ird J(z) |z|^2\d z+C_1\ird  J(z) \langle z \rangle^{k}\d z\le  C_2 \langle x\rangle^{k-2}.
     \end{split}
 \end{equation*}
Consequently, for every $k \ge 0$
	\begin{equation}\label{eq: Lyappowfinal}
		\begin{split}
			L^*_\e&\ap{x}^k=\frac{1}{\e^2}\ird J_\e(y)(\ap{x+y}^k-\ap{x}^k)\d y-x\nabla \f \le  C_2\ap{x}^{k-2}-k|x|^2 \ap{x}^{k-2}  \\
   &=  (C_2+k)\ap{x}^{k-2}-k(1+|x|^2 )\ap{x}^{k-2}
   \le C-\lambda 
			\ap{x}^k
		\end{split}
	\end{equation}
	for some $C,\lambda>0$.
 \end{proof}
\begin{remark}
  We point out that as $k$ approaches $0$, the right hand side of
  \eqref{eq: Lyappowfinal} goes to $0$ as well, since we don't have
  Lyapunov confinement in $L^1$ without weight.
\end{remark}

With almost the same procedure, we can prove a Lyapunov condition in
$L^1_{\exp,a}$.

\begin{theorem}
  \label{thm:lyapunovexp}
  Assume $J\in L^1_{\exp, a'}$ for some $a'>0$.  Then for every
  $0 < a < a'$ there exist $C, \lambda>0$ (only depending on $a$, $d$
  and $J$), such that
 	$$L_\e^*(e^{a\ap{x}})\le C-\lambda e^{a\ap{x}}.$$ 
\end{theorem}

\begin{proof}
  To lighten the notation in the following, we will use $C_1, C_2$ as
  general constants, whose value may change from line to line.  As before, for all $i=1,\dots,d$
	\begin{equation*}
		\ird  J(z) z_i\d z=0.
	\end{equation*}
	As before, for every $\f\in \mathcal{C}^{\infty}$, we Taylor expand around $x$, obtaining
	\begin{equation*}\begin{split}
			\frac{1}{\e^2}\ird J_\e(y)(\f(x+y)-\f(x))\d y=\frac{1}{2\e^2}\ird J(z) \sum_{i=1}^d \partial_i^2 \f(\xi)\e^2 z_i^{2}\d z
		\end{split}
	\end{equation*}
 where the Lagrange reminder is evaluated at some  $\xi(x,z)\in$ $[x,x+\e z]$.
 
 Set now $\f=e^{a\ap{x}}$. One can see that there exists $C_1>0$ a
 dimension dependent constant, such that
 $$\partial_i^2\f(\xi)=\frac{ae^{a\ap{\xi}}}{\ap{\xi}^2} \Big(
 a\xi_i^2+\ap{\xi} - \frac{\xi_i^2}{\ap{\xi}} \Big) \le
 C_1e^{a\ap{\xi}} \le C_1e^{a(\ap{x}+\e\ap{z})}$$ since $\f$ is a
 symmetric, positive and increasing function in every component and
 $\ap{\xi} \leq \langle |x|+\e |y| \rangle\le\ap{x}+\e\langle y
 \rangle$. Thus
 \begin{equation*}
     \begin{split}
         \frac{1}{\e^2}\ird J_\e(y)(\f(x+y)-\f(x))\d y
         &\le \frac{1}{2}\ird J(z)\sum_{i=1}^dC_1e^{\langle |x|+\e|z| \rangle}z_i^2 \d z
         \\
         &
         \le \frac{C_1}{2}e^{a\ap{x}}
         \ird J(z)e^{a\e\ap{z}}|z|^2 \d z
         \le  C_2e^{a\ap{x}}
     \end{split}
 \end{equation*}
since $$\ird J(z)|z|^2e^{a\e\ap{z}}\d z \le C_3\ird J(z)e^{a'\ap{z}}\d z<\infty.$$
(Let us point out that this is where the remark at the end of Theorem \ref{spectral gap} comes from). Then
	\begin{equation*}
		\begin{split}
			L^*_\e e^{a\ap{x}}=\frac{1}{\e^2}\ird J_\e(y)(\f(x+y)-\f(x))\d y-x\nabla \f & \le   C_2e^{a\ap{x}}-a\frac{|x|^2}{\ap{x}}e^{a\ap{x}}\le C-\lambda 
			e^{a\ap{x}}
		\end{split}
	\end{equation*}
	for some $C,\lambda>0$.
\end{proof}
We thus have proved a Lyapunov condition on the generator for
$V=\ap{\cdot}^k$ and $V=e^{a\ap{\cdot}}$. For these weight functions
(and in general for $\mathcal{C}^2$ functions), we see that actually
Assumption \eqref{eq:sLyap} implies Assumption \eqref{eq:
  semigroupLyap}, which is the statement of the next Theorem.

\begin{theorem}[Semigroup Lyapunov condition]\label{thm: finalLyap}
Assume $u_0$ and $J$
  satisfy the hypothesis of Theorem \ref{thm:lyapunovpow}
  (respectively \ref{thm:lyapunovexp}). For $\f=\ap{x}^k$
  (respectively $\f= e^{a\ap{x}}$), the semigroup associated to
  equation \eqref{NLFP} satisfies a Lyapunov condition 
  \eqref{eq: semigroupLyap} with $V\equiv \f$ and the same $C$ and $\lambda$ of Theorem
  \ref{thm:lyapunovpow} (respectively \ref{thm:lyapunovexp}).
    
\end{theorem}
\begin{proof}
We just prove it for $L^1_k$, being analogous in the other case. 

    Let  $u_0$ be in $\mathcal{D}(L_\e)$ by standard semigroup theory
$ u(t)\in \mathcal{C}^1([0,\infty);L^1_k)$, and consequently we compute
\begin{equation*}
\begin{split}
    \ddt \ird u(t,\cdot) \f(x)\d x&= \ird \ddt u(t,\cdot) \f(x) \d x  \\
    &=\ird (Lu) \f (x)\myeq \ird u(L^* \f)\le  \ird u(-\lambda \f+ C)
    \end{split}
\end{equation*}
where to prove $\myeq$ we use classic cutoff arguments with bump functions. 

We then use Gronwall Lemma with the $\mathcal{C}^1$ function $f(t):= \ird u(t)\f$:
That is,$$
f'(t)\le -\lambda f(t)+ C\implies f(t)\le e^{-\lambda t}f(0)+ \frac{C}{\lambda}(1-e^{-\lambda t}).$$

Since the $\mathcal{D}(L_\e)$ is dense in $L^1_k$, the claim is true also for  any $u_0\in L^1_k$ by classic density argument.
\end{proof}

\subsection{An $L^\infty$ version of Berry-Esseen's central limit theorem}
\label{sec:BerryEsseen}

Berry-Esseen-type theorems are quantitative versions of the central
limit theorem which provide an estimate on the error between the
normal distribution and the scaled law. A result of this kind will be
crucial for our proof in Section \ref{sec:posit} of positivity for the
nonlocal Fokker-Planck equation. The first version of this theorem was
proved independently by \cite{berry_1941} and
\cite{esseen1942liapunov} for real, centered i.i.d.~random variables
$X_1,\dots X_n$ with $\Var(X_1)=\sigma^2$ and $\mathbb{E}(|X_1|^3)=\rho$,
obtaining a result of the type

$$
\sup_{x\in\R}|F_n(x)-\Phi(x)|\le \frac{C\rho}{\sqrt{n} \sigma^3}
$$
where $\Phi$ is the cumulative distribution function (c.d.f.) of the
normal distribution, $F_n$ is the c.d.f.~of
$\bar S_n=\frac{1}{\sigma \sqrt{n}}\sum_{i=1}^nX_i$, and $C$ is a
universal constant depending on the dimension $d$. Several
generalizations have been developed, proving the same kind of results
for not i.i.d.~random vectors in $\R^d$, as well as improving the
precision of the value of $C$. Recent versions can be found in
\cite{bentkus_2005} and \cite{raic_2019}.

The majority of this kind of results are ``weak'' results in the sense
that they deal with the cumulative distribution function. For our results we
would like to work with the density instead. An important result in
this direction is \cite{lions1995strengthened} in which the authors
provide a stronger version of the CLT on the densities, and later in
\cite{goudon_junca_toscani_BE_2002}, {\cite{carlen_entropy_2010},
  \cite{mischler_kacs_2014} }. For this paper we need an estimate on
the densities of independent but not identically distributed random
vectors in $\R^d$, so we will extend the proofs of the previous cited
papers to our case. Following the ideas in
\cite{goudon_junca_toscani_BE_2002}, we weaken the hypothesis of
finiteness of the third moment. We require only a ``bit more''
than finite variance, paying a price on the speed of convergence
to zero of the error.

Let $f:\R^d\to[0,\infty)$ be a probability density function such that
there exists $s\in(0,1]$ with
\begin{equation}\label{assBE}
  \int_{\R^d}xf(x)\d x=0,
  \quad
  \int_{\R^d} x_ix_jf(x)\d x=\delta_{ij},
  \quad
  \int_{\R^d} |x|^{2+s}f(x)\d x:=\rho_{2+s}<\infty.
\end{equation}
Notice that this normalisation of moments of order $2$ is different
from \eqref{eq:J-moments-basic} by a factor of $2$. We prefer the
present normalisation in this section, which is a natural one in a
probabilistic setting and is the same one used in most statements of
Berry-Esseen's theorem. By a simple scaling, it is straightforward to
apply the result if one assumes the normalisation
(\ref{eq:J-moments-basic}) instead.

The main result of this section is the following $L^\infty$ version of
Berry-Esseen's Theorem for independent, nonidentically
distributed random vectors:

\begin{theorem}\label{infBE}
  Let $p\in (1,\infty)$ and $f\in L^1(\R^d)\cap L^{p}(\R^d)$ satisfying
  \eqref{assBE}. Let $(\sigma_i)_{i \geq 1}$ be a sequence of positive
  numbers, and define for $n \geq 1$,
  \begin{gather*}
    f^{\sigma_1,\dots\sigma_n}(x):=f_{\sigma_1}*f_{\sigma_2}*\cdots*f_{\sigma_n}(x)
    \\
    \mathfrak{f}_n(x):=(\bar{\sigma}^2 n)^{d/2}f^{\sigma_1,\dots\sigma_n}(\sqrt{n}\bar \sigma x),
  \end{gather*}
  with $\bar\sigma^2=\frac{1}{n}\sum_{i=1}^n\sigma_i^2$.
  Suppose, in addition, that there exist $l,L>0$ such that
  $$0<l\le \sigma_i \le L\qquad i=1,\dots, n$$      
  Then, there exist an integer $N=N(p)$ and a constant $C_{BE} $
  depending on  $\frac{L}{l},p, d,\rho_{2+s},\norm{f}_{L^p}$ such that $\Jnorm
  \in L^\infty$ for all $n \geq N$ and
\begin{equation*}
    \norm{\Jnorm-G}_{L^\infty}\le \frac{C_{BE}}{n^{s/2}}\quad \text{ for every $n\ge N$,}
\end{equation*}
where $G$ is the density of the standard normalized Gaussian.
\end{theorem}

In order to prove this we first present some preliminary results. The
first one is a generalization of the identity
$a^n-b^n=\sum_{i=1}^n (a-b)a^{i-1}b^{n-i}$, that can be easily proven
by induction:
\begin{lemma}\label{produttoria}
    For real $a_1,\dots,a_n$ and $b_1,\dots b_n$, for any $n\ge1$
    \begin{equation*}
        \prod_{i=1}^na_i-\prod_{i=1}^nb_i=\sum_{i=1}^n\Big[(a_i-b_i)\big(\prod_{j=1}^{i-1}b_j\big)\big(\prod_{j=i+1}^n a_j\big)\Big]
    \end{equation*}
\end{lemma}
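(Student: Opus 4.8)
The plan is to prove the identity by induction on $n$, exactly as suggested in the statement. It is convenient to fix the empty-product convention at the outset: a product $\prod_{j=a}^{b}c_j$ with $b<a$ equals $1$. With this convention the base case $n=1$ is immediate, since the left-hand side is $a_1-b_1$ and the only term on the right-hand side, the one with $i=1$, carries the empty products $\prod_{j=1}^{0}b_j=1$ and $\prod_{j=2}^{1}a_j=1$, so it also equals $a_1-b_1$.

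For the inductive step, assume the identity for some $n\ge 1$ and consider $n+1$ factors. The key manipulation is to insert and subtract the mixed product $a_{n+1}\prod_{i=1}^{n}b_i$:
\[
\prod_{i=1}^{n+1}a_i - \prod_{i=1}^{n+1}b_i
= a_{n+1}\Big(\prod_{i=1}^{n}a_i - \prod_{i=1}^{n}b_i\Big)
+ (a_{n+1}-b_{n+1})\prod_{i=1}^{n}b_i .
\]
To the first summand I would apply the induction hypothesis to $\prod_{i=1}^{n}a_i-\prod_{i=1}^{n}b_i$ and absorb the external factor $a_{n+1}$ into each term, using $a_{n+1}\prod_{j=i+1}^{n}a_j=\prod_{j=i+1}^{n+1}a_j$; this yields exactly the terms $i=1,\dots,n$ of the target formula for $n+1$. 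The second summand, written as $(a_{n+1}-b_{n+1})\big(\prod_{j=1}^{n}b_j\big)\big(\prod_{j=n+2}^{n+1}a_j\big)$ with an empty trailing product, is precisely the $i=n+1$ term. Summing the two pieces gives the claimed expression for $n+1$, closing the induction.

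Alternatively, and arguably more transparently, one can bypass induction via a direct telescoping argument: for $0\le i\le n$ set $P_i:=\big(\prod_{j=1}^{i}b_j\big)\big(\prod_{j=i+1}^{n}a_j\big)$, so that $P_0=\prod_{i=1}^{n}a_i$ and $P_n=\prod_{i=1}^{n}b_i$. A single line gives $P_{i-1}-P_i=\big(\prod_{j=1}^{i-1}b_j\big)(a_i-b_i)\big(\prod_{j=i+1}^{n}a_j\big)$, whence $\prod_{i=1}^{n}a_i-\prod_{i=1}^{n}b_i=P_0-P_n=\sum_{i=1}^{n}(P_{i-1}-P_i)$ is exactly the stated identity. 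There is no genuine obstacle here; the only point needing a little care is the correct reading of the boundary terms ($i=1$ carries no $b$-factors on the left, $i=n$ carries no $a$-factors on the right), which is why I would state the empty-product convention explicitly before starting.
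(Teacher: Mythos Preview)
Your proof is correct and follows exactly the approach the paper indicates (the paper merely states the identity ``can be easily proven by induction'' without giving details). Your telescoping variant is also valid and arguably cleaner, but either argument suffices here.
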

The second one is a H\"older regularity result for the Fourier transform.

\begin{lemma}\label{lem: holdfou}
  Let $f:\R^d\to \R$ be a probability distribution such that for some
  $s \in (0,1)$,
  $$\ird f(x)|x|^{2+s}\d x=\rho_{2+s} < +\infty.$$
  Then $\hat{f}\in \mathcal{C}^{2,s}$; that is, $\hat{f}\in \mathcal{C}^2$ and for every
  $x,y\in \R^d$
  $$\max_{|\alpha|=2}|D^\alpha \hat{f}(x)-D^\alpha \hat{f}(y)|\le
  4 \rho_{2+s}|x-y|^{s}. $$ 
\end{lemma}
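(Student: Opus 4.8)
The plan is to differentiate $\hat f$ under the integral sign and then control the increments of its second-order derivatives by an elementary interpolation inequality for the complex exponential.

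First I would note that the hypothesis already forces all moments of order up to $2$ to be finite: since $f$ is a probability density, $\ird f(z)|z|^2\,\d z\le 1+\int_{\{|z|>1\}}f(z)|z|^{2+s}\,\d z\le 1+\rho_{2+s}<\infty$. Consequently, for every multi-index $\alpha$ with $|\alpha|\le 2$ the map $z\mapsto f(z)z^\alpha$ is integrable, and the classical differentiation-under-the-integral-sign theorem---justified by dominated convergence with dominating function $z\mapsto f(z)|z|^{|\alpha|}$---gives $\hat f\in\mathcal C^2$ together with the formula $D^\alpha\hat f(\xi)=\ird f(z)(-iz)^\alpha e^{-iz\cdot\xi}\,\d z$; continuity of $D^\alpha\hat f$ likewise follows from dominated convergence.

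For the Hölder estimate, fix a multi-index $\alpha$ with $|\alpha|=2$ and points $x,y\in\R^d$. From the formula above, using $|(-iz)^\alpha|=|z^\alpha|\le|z|^2$, one has $|D^\alpha\hat f(x)-D^\alpha\hat f(y)|\le\ird f(z)|z|^2\,|e^{-iz\cdot x}-e^{-iz\cdot y}|\,\d z$. The key point is the elementary bound, valid for all real $\theta$ and all $s\in(0,1]$, namely $|1-e^{i\theta}|=2|\sin(\theta/2)|\le\min\{|\theta|,2\}\le 2^{1-s}|\theta|^s$, where the last step is simply the inequality $\min\{a,2\}\le 2^{1-s}a^s$ for every $a\ge0$. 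Writing $e^{-iz\cdot x}-e^{-iz\cdot y}=e^{-iz\cdot x}\big(1-e^{iz\cdot(x-y)}\big)$ and applying this with $\theta=z\cdot(x-y)$ gives $|e^{-iz\cdot x}-e^{-iz\cdot y}|\le 2^{1-s}|z|^s|x-y|^s$, hence
\[
|D^\alpha\hat f(x)-D^\alpha\hat f(y)|\le 2^{1-s}|x-y|^s\ird f(z)|z|^{2+s}\,\d z=2^{1-s}\rho_{2+s}\,|x-y|^s\le 4\rho_{2+s}\,|x-y|^s,
\]
which is the claimed inequality. (The constant $4$ is far from optimal---$2\rho_{2+s}$ already works---and the slack absorbs the effect of a different Fourier normalisation.)

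I do not expect any genuine obstacle here. The two places needing a line of care are the justification of differentiating under the integral (only finiteness of the second moment is used, which the hypothesis supplies) and the interpolation inequality $\min\{a,2\}\le 2^{1-s}a^s$, which is checked by distinguishing $a\le 2$ and $a>2$. As a sanity check, the argument also covers the endpoint $s=1$, where it yields $D^\alpha\hat f$ Lipschitz, consistent with $\hat f\in\mathcal C^{2,1}$.
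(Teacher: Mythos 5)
Your proof is correct, and it proves the inequality with $|x-y|^s$ on the right-hand side, which is what the paper actually uses later (in Lemma~\ref{lemfou}); the $|x+y|^s$ appearing in the statement is a typo.

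Your route differs from the paper's in one technical detail. The paper estimates $|e^{-ix\xi}-e^{-ix\eta}|$ by the Lipschitz bound $|\xi-\eta||x|$ on the ball $\{|x|\le R\}$ and by the trivial bound $2$ on its complement, then optimises over the splitting radius $R=|\xi-\eta|^{-1}$; this produces $2\rho_{2+s}(|\xi-\eta|R^{1-s}+R^{-s})=4\rho_{2+s}|\xi-\eta|^s$. You instead interpolate pointwise, using $|1-e^{i\theta}|\le\min\{|\theta|,2\}\le 2^{1-s}|\theta|^s$, which yields the slightly sharper constant $2^{1-s}\rho_{2+s}\le 4\rho_{2+s}$ in a single line, with no domain split and no optimisation. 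The two arguments encode the same interpolation between the Lipschitz and boundedness estimates for the exponential increment---one at the level of the integration domain, the other at the level of the integrand---so they buy the same result; yours is a bit cleaner and avoids the minor bookkeeping error the paper makes in writing $|x|^{2-s}|x|^{1-s}$ where $|x|^{2+s}|x|^{1-s}$ was intended. Your preliminary justification of differentiating under the integral sign, and the remark that the argument extends to $s=1$, are both correct.
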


\begin{proof}
  The $\mathcal{C}^2-$regularity is immediate from Fourier transform
  properties. We are left to show the H\"older-continuity of the
  derivatives of order $2$, namely, for every multi-index $\alpha$
  such that $|\alpha|=2$,
  \begin{equation*}
    \begin{split}
      &\abs{D^{\alpha}\hat{f}(\xi)
      -D^{\alpha}\hat{f}(\eta)}=\abs{-\widehat{x^\alpha
        f}(\xi)+\widehat{x^\alpha f}(\eta)}=\abs{\ird
        \Big(e^{-ix\eta}-e^{-ix\xi})x^\alpha f(x)\d x}
      \\
      &\le \int_{|x|\le R}|e^{-ix\xi}-e^{-ix\eta}||x|^2|f(x)\d
        x+\int_{|x|>R}\abs{e^{-ix\xi}-e^{-ix\eta}}|x|^2|f(x)\d x
      \\
      &\le  2|\xi-\eta|\int_{|x|\le R}|x|^{2+s}|x|^{1-s}f(x)\d
        x+2R^{-s} \int_{|x|>R}|x|^{2+s}f(x)\d x
      \\
      &\leq
        2\rho_{2+s}(|\xi-\eta|R^{1-s}+R^{-s}).
    \end{split}
  \end{equation*}
  Choosing $R=|\xi-\eta|^{-1}$ we conclude the proof.
\end{proof}

The last ingredient we need is the following technical Lemma (that is
basically \cite[lemma 4.8]{mischler_kacs_2014} with a weaker
assumption on moments. We denote by $B_\delta(x_0)$, the ball in $\R^d$
centered at $x_0$ with radius $\delta$. When the ball is centered at $0$,
we use $B_\delta$ as a shorthand.
\begin{lemma}\label{lemfou}
  Let $f$ be a probability distribution on $\R^d$ satisfying
  \eqref{assBE}. Then
  \begin{enumerate}[(i)]
  \item  There exist $\delta\in(0,1)$ depending on $\rho_{2+s}$ such that 
    \begin{equation*}
      \forall \xi\in B_\delta \quad |\hat{f}(\xi)|\le e^{-\frac{|\xi|^2}{4}}
    \end{equation*}
  \item Assume additionally that $f\in L^p$, with
    $p\in(1,\infty]$. For any $\delta>0$ there exists
    $\kappa=\kappa(\rho_{2+s},\delta, \norm{f}_{L^p})\in(0,1)$ such
    that
    $$\sup_{\abs{\xi}\ge\delta}|\hat{f}(\xi)|\le \kappa(\delta).$$
\end{enumerate}     
\end{lemma}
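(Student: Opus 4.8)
The plan is to prove the two parts of Lemma \ref{lemfou} using the moment conditions \eqref{assBE} and a Riemann–Lebesgue-type argument, respectively.

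\textbf{Part (i).} First I would Taylor-expand $\hat f$ near the origin. Since $f$ is a probability density with $\int x f = 0$ and $\int x_i x_j f = \delta_{ij}$, a second-order Taylor expansion (justified by Lemma \ref{lem: holdfou}, which gives $\hat f \in \mathcal C^{2,s}$) yields
\begin{equation*}
  \hat f(\xi) = 1 - \frac{|\xi|^2}{2} + \omega(\xi),
\end{equation*}
where the remainder satisfies $|\omega(\xi)| \le C \rho_{2+s} |\xi|^{2+s}$; indeed this follows from writing $\hat f(\xi) - 1 + \tfrac12 |\xi|^2 = \int (e^{-i x\cdot\xi} - 1 + i x\cdot\xi + \tfrac12 (x\cdot\xi)^2) f(x)\,dx$ and using the elementary bound $|e^{i\theta} - 1 - i\theta + \tfrac12\theta^2| \le \min\{|\theta|^2, \tfrac16|\theta|^3\} \le C|\theta|^{2+s}$. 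Therefore $|\hat f(\xi)| \le 1 - \tfrac12|\xi|^2 + C\rho_{2+s}|\xi|^{2+s} \le 1 - \tfrac14|\xi|^2$ provided $|\xi| \le \delta$ with $\delta$ small enough that $C\rho_{2+s}|\xi|^s \le \tfrac14$, i.e.\ $\delta$ depending only on $\rho_{2+s}$. Finally $1 - \tfrac14|\xi|^2 \le e^{-|\xi|^2/4}$, giving the claim (possibly after shrinking $\delta$ once more so that the elementary inequality holds on the relevant range).

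\textbf{Part (ii).} Here I would argue that $\sup_{|\xi|\ge\delta}|\hat f(\xi)| < 1$. Since $f \in L^1 \cap L^p$ with $p > 1$, $f$ is not a lattice-type or degenerate distribution; more concretely, $\hat f(\xi) \to 0$ as $|\xi| \to \infty$ by the Riemann–Lebesgue lemma, but this alone does not control the region $\delta \le |\xi| \le M$ for large $M$. On that compact annulus, $|\hat f|$ is continuous and strictly less than $1$ at every point (equality $|\hat f(\xi)|=1$ would force $f$ to be supported on a coset of a proper closed subgroup, contradicting $f \in L^p$ for $p>1$, since such an $f$ cannot be absolutely continuous — or one argues directly that $|\hat f(\xi)| = 1$ implies $e^{-ix\cdot\xi}$ is $f\,dx$-a.e.\ constant, impossible for an $L^p$ density), so it attains a maximum $<1$ there; combining with Riemann–Lebesgue for $|\xi| > M$ gives a uniform bound $\kappa(\delta) < 1$. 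The quantitative dependence of $\kappa$ on $\rho_{2+s}$, $\delta$ and $\norm{f}_{L^p}$ can be extracted by making each step effective: Riemann–Lebesgue can be quantified using $\norm{f}_{L^p}$ (e.g.\ splitting $f = f\mathbbm 1_{|x|\le R} + f\mathbbm 1_{|x|>R}$, bounding the tail via $\rho_{2+s}$ and the main part via an $L^p$–$L^{p'}$ estimate on the modulus of continuity of characters), and the annulus bound can be made quantitative using the same second-order information near the boundary $|\xi|=\delta$ together with a covering argument.

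\textbf{Main obstacle.} The delicate point is Part (ii): getting an \emph{explicit} $\kappa(\delta) < 1$ rather than a mere qualitative statement, and in particular ruling out $|\hat f(\xi)|$ approaching $1$ on the annulus in a way that depends only on the stated quantities. The cleanest route is probably the one in \cite{mischler_kacs_2014}, which is cited as the model for this lemma: one shows $|\hat f(\xi)|^2 = \widehat{f * \tilde f}(\xi)$ where $\tilde f(x) = f(-x)$, so $1 - |\hat f(\xi)|^2 = \int (1 - \cos(x\cdot\xi))(f*\tilde f)(x)\,dx$, and then lower-bounds this integral using that $f * \tilde f$ has an $L^{p/(2-p)}$-type bound (or simply that it is a fixed probability density with controlled second moment and controlled $L^1\cap L^p$-norm, so it puts a definite amount of mass at a definite distance from any lattice). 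Since the paper only needs \emph{existence} of such a $\kappa$ with the stated dependencies and is explicitly not optimising constants, I would present the argument at the level of "by Riemann–Lebesgue and compactness on the annulus, together with the fact that $f\in L^p$ cannot have $|\hat f|=1$, there exists $\kappa(\rho_{2+s},\delta,\norm f_{L^p})<1$", and then indicate how each ingredient is quantitative, referring to \cite{mischler_kacs_2014} for the details of the effective bound.
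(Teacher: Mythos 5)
Your Part (i) is correct and is essentially the paper's argument in a slightly different dress: the paper reaches the same remainder bound $O(\rho_{2+s}|\xi|^{2+s})$ via the fundamental theorem of calculus together with Lemma \ref{lem: holdfou} (the $\mathcal{C}^{2,s}$-Hölder bound on $\hat f$), whereas you expand the integrand $e^{-ix\cdot\xi}$ directly and use the elementary estimate on $e^{i\theta}-1-i\theta+\tfrac12\theta^2$. Both routes land on $|\hat f(\xi)|\le 1-\tfrac14|\xi|^2\le e^{-|\xi|^2/4}$ for $|\xi|$ small, with $\delta$ depending only on $\rho_{2+s}$ (and $s$), so this part is fine.

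For Part (ii) you correctly identify the obstacle — a compactness-plus-Riemann--Lebesgue argument gives $\sup_{|\xi|\ge\delta}|\hat f|<1$ qualitatively, but the paper needs $\kappa$ depending only on $\rho_{2+s}$, $\delta$, $\|f\|_{L^p}$ — and you honestly defer the quantitative step to the literature. The paper does exactly the same thing at the same juncture: it simply invokes \citet[Prop.~26~(iii)]{carlen_entropy_2010}, where the finite-entropy hypothesis of that proposition is supplied by $f\in L^1\cap L^p$ with a finite moment. You cite \cite{mischler_kacs_2014} instead; the relevant statement there (their Lemma~4.8) is stated under a third-moment assumption, while \cite{carlen_entropy_2010} is the reference the authors actually rely on and fits the weakened $(2+s)$-moment hypothesis. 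Your sketch via $1-|\hat f(\xi)|^2=\int(1-\cos(x\cdot\xi))\,(f*\tilde f)(x)\,\dx$ is a valid route to a quantitative bound, but as written it is an outline rather than a proof; if you want to make it self-contained you should carry out the covering argument and the tail-plus-modulus-of-continuity splitting you allude to, otherwise the cleaner move is to cite \cite{carlen_entropy_2010} as the paper does.
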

\begin{proof}
\begin{enumerate}[(i)]
\item For $\hat{f}:\R^d\to \R$ and for $\xi$ define $h:[0,|\xi|]\to \R$ such that $h(t):=\hat{f}\Big(t\frac{\xi}{|\xi|}\Big)$.
Applying twice the fundamental theorem of calculus to  $h$ we write
\begin{equation*}
    \hat{f}(\xi)=h(|\xi|)=1-\frac{|\xi|^2}{2}+ \int_0^{|\xi|} (h''(z)-h''(0))(z-|\xi|)\d z
\end{equation*}
where 
\begin{equation*}
\begin{split}
h''(z)=\Big(\frac{\xi}{|\xi|}\Big)^TH_{\hat{f}}\Big(z\frac{\xi}{|\xi|}\Big)\Big(\frac{\xi}{|\xi|}\Big).
\end{split}
\end{equation*}
Hence, applying Lemma \ref{lem: holdfou}, we can bound the reminder 
\begin{equation*}
    \begin{split}
        &\abs{\int_0^{|\xi|}\Big(\frac{\xi}{|\xi|}\Big)^T\Big(H_{\hat{f}}\Big(z\frac{\xi}{|\xi|}\Big)-H_{\hat{f}}(0)\Big)\Big(\frac{\xi}{|\xi|}\Big)(z-|\xi|)\d z}\\
        &\le \int_0^{|\xi|} \tnorm{(H_{\hat{f}}\Big(z\frac{\xi}{|\xi|}\Big)-H_{\hat{f}}(0)}|z-|\xi||\d z\\
        &\le  4 \rho_{2+s} \int_0^{|\xi|} |z|^s|z-|\xi||\d z=\frac{ 4 \rho_{2+s}|\xi|^{2+s}}{(s+1)(s+2)},
    \end{split}
\end{equation*}
where $H_f(x)$ is the Hessian matrix of $f$ applied at point $x$ and $\tnorm{\cdot}$ is the spectral norm which is always less or equal than the $\max$- norm used in Lemma \ref{lem: holdfou}.

Thus, for small $|\xi|<\delta$ which explicitly depends only on $s$ and $\rho_{2+s}$, we have 
$$
\abs{\hat{f}(\xi)}\le \abs{1-\frac{3}{8}|\xi|^2}\le e^{-\frac{|\xi|^2}{4}}.
$$
\item This statement is a direct consequence of \cite[Prop. 26
  \textit{(iii)}]{carlen_entropy_2010}, where the assumption of
  finiteness of the entropy is ensured by our assumption that
  $f \in L^1\cap L^p$ and has some finite moment.
  \qedhere
\end{enumerate}

\end{proof}
We can now present the proof of our version of Berry-Esseen theorem,
that follows the ideas presented in
\cite[Thm. 1]{goudon_junca_toscani_BE_2002} and \cite[Thm.
4.6]{mischler_kacs_2014}.

\begin{proof}[Proof of Theorem \ref{infBE}]
	First, we notice that $\mathfrak{f}_n$ satisfies the hypotheses of Lemma \ref{lemfou}  for every $n$, and, in particular
	$$\ird\mathfrak{f}_n(x)x_ix_j\d x=\delta_{ij}.$$
	We can rewrite
	\begin{equation*}
		\begin{split}
			{\Jnormh}(\xi)&=\ird e^{-ix\xi} (\bar \sigma^2 n)^{d/2}f^{\sigma_1\dots,\sigma_n}(\sqrt{n}\bar \sigma x)\d x=\ird e^{-iy \frac{\xi}{\sqrt{n}\bar \sigma}}f_{\sigma_1}*\cdots*f_{\sigma_n}(y)\d y\\
			&=\mathcal{F}(f_{\sigma_1}*\cdots *f_{\sigma_n})\Big(\frac{\xi}{\sqrt{n}\bar \sigma}\Big)=\prod_{j=1}^n\hat{f}_{\sigma_j}\Big(\frac{\xi}{\sqrt{n}\bar \sigma}\Big)=\prod_{j=1}^n \ird e^{-ix \frac{\xi}{\sqrt{n}\bar \sigma}}\sigma_j^{-d}f\Big(\frac{x}{\sigma_j}\Big) \d x \\
			&=\prod_{j=1}^n \ird e^{-iy\frac{\xi \sigma_j}{\sqrt{n}\bar \sigma}}f(y)\d y=\prod_{j=1}^n\JJi,
		\end{split}
	\end{equation*}
	and, since $G$ is the standard gaussian, its Fourier transform satisfies
	\begin{equation*}
		\hat{G}(\xi)=\prod_{j=1}^n \GGi.
	\end{equation*}
	Without loss of generality, we can assume $p\le2$. Hausdorff-Young's
	inequality then implies that $\hat{f}$ is in $L^{p'}\cap L^\infty$
	with $p'\in [2,\infty)$. Using H\"older's inequality,
	$$\begin{aligned}
\norm{\prod_{j=1}^n\JJi}_{L^1}&\le \prod_{j=1}^n\norm{\JJi}_{L^n}=
	\prod_{j=1}^n\Big(\frac{\bar\sigma
		\sqrt{n}}{\sigma_j}\Big)^{d/n}\norm{\hat{f}}_{L^n}\\&\le
	n^{d/2}\Big(\frac{L}{l}\Big)^d\norm{\hat{f}}_{L^n}^n,
\end{aligned}$$
	which is
	finite for $n\ge p'$, i.e.  $\Jnormh$ is in $L^1$ for any $n\ge
	p'$. We can thus control the $L^\infty$ norm with $L^1$ norm of the
	Fourier transform, since
	\begin{equation*}
		\abs{\Jnorm(x)-G(x)}
		=
		(2\pi)^{-d}\abs{\ird (\Jnormh (\xi)-\hat G(\xi))e^{i\xi x}\d \xi}
		\le
		(2\pi)^{-d}\ird \abs{\Jnormh-\hat G} \d \xi
	\end{equation*}
	Let $\delta$ be given by Lemma \ref{lemfou} $(i)$ for $\hat f$ and
	define
	$B_\delta=B_{\delta}(n,l,L):=\{ \xi: \abs{\xi}<\sqrt{n}
	\frac{l}{L}\delta \}$.  Following known techniques, we split the
	integral into high and low frequencies
	\begin{equation*}
		\norm{\Jnorm-G}_{L^\infty}\le \int_{\xi\in B^c_{\delta}}|\Jnormh|\d \xi+\int_{\xi\in B^c_{\delta}}|\hat G|\d \xi+\int_{\xi\in B_{\delta}}|\Jnormh-\hat G|\d \xi:=T_1+T_2+T_3
	\end{equation*}
	The first two terms can be bounded in a similar way. Without loss of generality, assume $p'$ is an integer (otherwise, we
	can work with $\cl{p'}$, the smallest integer larger or equal than
	$p'$, since $\hat{f}$ is also in $L^{\cl{p'}}$). Then
	\begin{multline}
		\label{dod}
		\int_{B_\delta^c} \prod_{j=1}^n\abs{\JJi} \d \xi
		=
		n^{d/2}\int_{|\eta| \ge\frac{l}{L}\delta}
		\prod_{j=1}^n \abs{\hat f \Big(\frac{\sigma_j}{\bar \sigma} \eta\Big)} \d\eta
		\\
		\le
		n^{d/2}  \sup_{|\eta| \ge\frac{l}{L}\delta} \abs{\hat f \Big(\frac{\sigma_{p'+1}}{\bar \sigma} \eta\Big)}\cdots \sup_{|\eta| \ge\frac{l}{L}\delta} \abs{\hat f\Big(\frac{\sigma_n}{\bar \sigma} \eta\Big)}\\ \int_{|\eta| \ge\frac{l}{L}\delta} \abs{\hat f \Big(\frac{\sigma_1}{\bar \sigma} \eta\Big)}\cdots
		\abs{\hat f\Big(\frac{\sigma_{p'}}{\bar \sigma} \eta\Big)}
		\d\eta.
	\end{multline}
	For each $j=p'+1,\dots,n$
	\begin{equation}\label{eq: supdelta}
		\sup_{|\eta| \ge\frac{l}{L}\delta}
		\abs{\hat f \Big(\frac{\sigma_{j}}{\bar \sigma} \eta\Big)}
		=
		\sup_{|\eta'| \frac{\bar\sigma}{\sigma_j}
                  \geq
                  \frac{l}{L} \delta}
		| \hat f ( \eta' ) |
                \leq
                \sup_{|\eta'| \geq \frac{l^2}{L^2} \delta}
		| \hat f ( \eta' ) |
                \leq \kappa(\delta)
	\end{equation}
	with $\kappa(\delta)\in (0,1)$, coming from Lemma \ref{lemfou} $(ii)$.
	The factors inside the integral in Equation \eqref{dod} can be bounded
	by using H\"older's inequality:
	we rewrite
	\begin{equation}\label{eq: BEholdineq}
		\begin{split}
			\int_{|\eta| \ge\frac{l}{L}\delta}
			&\abs{\hat f \Big(\frac{\sigma_1}{\bar \sigma} \eta\Big)}\cdots
			\abs{\hat f\Big(\frac{\sigma_{p'}}{\bar \sigma} \eta\Big)}
			\d\eta
			\\
			&\le \bigg(\ird \abs{\hat f \Big(\frac{\sigma_1}{\bar \sigma} \eta
				\Big)\d\eta}^{p'}\bigg)^{1/p'}\cdots  \bigg(\ird \abs{\hat f
				\Big(\frac{\sigma_{p'}}{\bar \sigma} \eta\Big)
				\d\eta}^{p'}\bigg)^{1/p'}
			\\
			&=\Big(\frac{\bar \sigma}{\sigma_1}\Big)^{d/p'}
			\|\hat f\|_{p'}\cdots\Big(\frac{\bar \sigma}{\sigma_{p'}}\Big)^{d/p'}
			\|\hat f\|_{p'}\le \Big(\frac{L}{l}\Big)^{d}
			\|\hat{f}\|_{p'}^{p'}\le\Big(\frac{L}{l}\Big)^{d}C_p \|f\|_{p}^{p'},
		\end{split}
	\end{equation}
	after first applying H\"older's inequality
	and then Hausdorff-Young's inequality ($C_p$ is the Hausdorff-Young constant).
	Hence, plugging \eqref{eq: supdelta} and \eqref{eq: BEholdineq} into equation
	\eqref{dod} we obtain
	\begin{equation}
		T_1\le C_p\Big( \sqrt{n}\frac{L}{l}\Big)^{d} \kappa(\delta)^{n-p'}\|f\|_p^{p'}.
	\end{equation}
	We proceed analogously for $T_2$, obtaining a very similar bound.
	Since $\kappa(\delta)<1$, $\kappa(\delta)^{n-p'}$ decays very fast in $n$, thus for $n$ large enough (depending on $p$), there exists $C_1>0$ such that
	\begin{equation}\label{eq: 21bis}
		T_1+T_2\le \frac{C_1}{n^{s/2}}
	\end{equation}
	Now we focus our attention on $T_3$. Using the factorization in Lemma
	\ref{produttoria} we obtain
	\begin{multline*}
		\label{eqA}
		\abs{\Jnormh(\xi)-\hat G(\xi)}=\prod_{j=1}^n \JJi-\prod_{i=1}^n \GGi\\
		=\sum_{j=1}^n \abs{\JJi-\GGi}\abs{\prod_{k=1}^{j-1}\JJj}\abs{\prod_{k=j+1}^{n}\GGj}.
	\end{multline*}
	From Lemma \ref{lemfou} $(i)$ it follows that for $\xi\in B_\delta$
	\begin{equation*}
		\abs{\hat{f}\Big(\frac{\xi \sigma_k}{\sqrt{n}\bar\sigma }\Big)}\le e^{-\frac{|\xi|^2}{4n}\frac{\sigma_k^2}{\bar \sigma^2}}\le e^{-\frac{|\xi|^2}{4n}(\frac{l}{L})^2}.
	\end{equation*}
	and
	\begin{equation*}
		\abs {\hat{G}\Big({\frac{\xi \sigma_k}{\sqrt{n}\bar \sigma}}\Big)}=e^{-\frac{|\xi|^2}{2n}\frac{\sigma_k^2}{\bar \sigma^2}}\le e^{-\frac{|\xi|^2}{2n}(\frac{l}{L})^2  }.
	\end{equation*}
	Then
	\begin{equation*}
		\begin{split}
			&\frac{\abs{\Jnormh(\xi)-\hat G(\xi)}}{|\xi|^{2+s}}
			\\&=\frac{1}{\abs{\xi}^{2+s}}\Big[ \sum_{j=1}^n
			\abs{\JJi-\GGi}\abs{\prod_{k=1}^{j-1}\JJj}\abs{\prod_{k=j+1}^{n}\GGj}\Big]
			\\
			&\le \frac{1}{\abs{\xi}^{2+s}} \sum_{j=1}^n \abs{\JJi-\GGi}
			e^{-\frac{|\xi|^2}{4n}(j-1)(\frac{l}{L})^2}e^{-\frac{|\xi|^2}{2n}(n-j)(\frac{l}{L})^2
			}
			\\
			&\le \frac{1}{\abs{\xi}^{2+s}} \sum_{j=1}^n \abs{\frac{\xi
					\sigma_j}{\sqrt{n}\bar \sigma}}^{2+s} \frac{\abs{\JJi-\GGi}}{
				\abs{\frac{\xi \sigma_j}{\sqrt{n}\bar
						\sigma}}^{2+s}}e^{-\frac{|\xi|^2}{4}\frac{n-1}{n}(\frac{l}{L})^2}
			\\
			&\le \frac {1}{n^{\frac{2+s}{2}}}\Big(\frac{L}{l}\Big)^{2+s}n
			\sup_{\eta'}\frac{|\hat{f}(\eta')-\hat
				G(\eta')|}{\abs{\eta'}^{2+s}}
			e^{-\frac{\abs{\xi}^2}{8}(\frac{l}{L})^2 }.
		\end{split}
	\end{equation*}
	Since $f $ and $G$ are both centered and have the same covariance
	matrix, in the same spirit of the proof of Lemma \ref{lemfou} $(i)$,
	we obtain
	\begin{equation*}
\begin{split}
		\sup_{\eta'}\frac{|\hat{f}(\eta')-\hat G(\eta')|}{\abs{\eta'}^{2+s}}
		&\le
		\sup_{\eta'}\left(
		\frac{|\hat{f}(\eta')-\hat{f}(0)|}{\abs{\eta'}^{2+s}}
		+\frac{|\hat G(\eta')-\hat G(0)|}{\abs{\eta'}^{2+s}}
		\right)
		\\&\le
		C_1(\rho_{2+s} + M_{2+s}(G)).
\end{split}
	\end{equation*}
	where $M_{2+s}(G)=\int |x|^{2+s} G(x)\d x$.
	This leads to the final estimate on the third term (with the
	``correct'' rate of decay), that is
	\begin{equation*}
		\begin{split}
			T_3&\le \frac{C_1}{n^{s/2}}\Big(\frac{L}{l}\Big)^{2+s}(\rho_{2+s}+M_{2+s}(G)) \ird e^{-\frac{\abs{\xi}^2}{8}(\frac{l}{L})^2 }|\xi|^{2+s}\d \xi\\
			&:=\frac{1}{n^{s/2}}\Big(\frac{L}{l}\Big)^{2+s}(\rho_{2+s}+M_{2+s}(G))C_{\frac{L}{l},d}.
		\end{split}
	\end{equation*}
	Taking into account \eqref{eq: 21bis} and adding  $T_3$, the proof is completed.
\end{proof}

\subsection{Positivity of the solution}
\label{sec:posit}

In order to show Harris's positivity condition (Hypothesis
\eqref{eq:HLB}) we are in fact able to prove a stronger result, namely
a local positivity condition at any positive time $t>0$:

\begin{theorem}[Lower bound]
  \label{posit}
  Let $J \: \R^d \to [0,\infty)$ satisfy 
  \eqref{eq:J-moments-basic}, \eqref{eq:smoment}, and   $J\in L^p$ for some $p\in (1,\infty].$ Then, the solution $u$ of equation
    \eqref{NLFP} with a probability initial condition $u_0$ is uniformly
    positive on compact sets for all $0 < \e \le 1$. That is:
    for every $0 < \e \leq 1$, $t>0$, $R_1 R_2 > 0$,
    there exists an $\e-$independent constant $\alpha,$
    such that the solution of \eqref{NLFP} with initial data $u_0\in L^1_k$ is such that 
  
  $$
   u(t,x) \ge \alpha \int_{B_{R_2}}u_0 \qquad \text{for } x\in B_{R_1}.
  $$
\end{theorem}

This is a nontrivial estimate since we seek the independence of
$\alpha$ from $\e$; in particular, we seek positivity that holds
uniformly in the limit $\e\rightarrow 0$.

Our proof of this estimate is based on the Wild sum representation of
the solution from Section \ref{sec:representation} and on the version
of Berry-Esseen's theorem we proved in Section
\ref{sec:BerryEsseen}. We distinguish two cases which are described by
the two following Lemmas:

\begin{lemma}\label{smalle2} Let $J$ satisfy \eqref{eq:J-moments-basic} and \eqref{eq:smoment}. Then, for any $t,\eta>0$, there exists  $\e_0>0 $ and a ($\e-$independent) constant
  $A>0$ such that
  $$\Jn(x)\ge A\qquad \text{ for all } x\in B_\eta $$
  for all $\e< \e_0$ and for   any  $t_1,\dots, t_n$ with $t \ge t_1\ge\dots t_n\ge 0$ and $n$ such that 
  $$\frac{t}{\e^2}\le n\le 2\frac{t}{\e^2}.$$

\end{lemma}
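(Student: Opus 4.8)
The plan is to match $\Jn$ to the hypotheses of the $L^\infty$ Berry--Esseen Theorem~\ref{infBE} by a harmless rescaling, to apply that theorem with an $\e$-independent constant, and then to unscale, using that under the constraint $t/\e^2\le n\le 2t/\e^2$ the quantity $n\bar\sigma^2$ is pinned between two $\e$-independent constants.

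First I would renormalise $J$ to unit covariance. Since $\int J(z) z_iz_j \d z=2\delta_{ij}$, put $f(x):=2^{d/2}J(\sqrt2\,x)$; then $f$ is a probability density satisfying \eqref{assBE} with the same $s$ (its $(2+s)$-moment equals $2^{-(2+s)/2}\rho_{2+s}<\infty$ by \eqref{eq:smoment}), and $f\in L^1\cap L^p$ because $J$ does, where $p\in(1,\infty)$ may be taken finite (if only $J\in L^\infty$ is available, pass to any finite $p$). A one-line scaling identity gives $J_\sigma=f_{\sqrt2\,\sigma}$, hence
\[
  \Jn=J_{\e e^{t_1}}*\cdots*J_{\e e^{t_n}}=f^{\sigma_1,\dots,\sigma_n},
  \qquad \sigma_i:=\sqrt2\,\e\,e^{t_i}.
\]
Because $0\le t_i\le t$ we have $l:=\sqrt2\,\e\le\sigma_i\le\sqrt2\,\e\,e^t=:L$, so $L/l=e^t$ depends on $t$ alone --- not on $\e$, not on $n$, not on the particular $t_i$. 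Theorem~\ref{infBE} therefore applies and produces an integer $N=N(p)$ and a constant $C_{BE}$ depending only on $e^t,p,d,\rho_{2+s}$ and $\|f\|_{L^p}$ (in particular independent of $\e$), such that, writing $\bar\sigma^2=\tfrac1n\sum_i\sigma_i^2$ and $\mathfrak f_n(x)=(\bar\sigma^2n)^{d/2}\Jn(\sqrt n\,\bar\sigma\,x)$,
\[
  \|\mathfrak f_n-G\|_{L^\infty}\le \frac{C_{BE}}{n^{s/2}}\qquad\text{for all }n\ge N.
\]

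Next I would pin down $n\bar\sigma^2$. Since $\sigma_i^2=2\e^2e^{2t_i}$ with $1\le e^{2t_i}\le e^{2t}$, we get $n\bar\sigma^2=\sum_i\sigma_i^2\in[2n\e^2,\,2n\e^2e^{2t}]$, so using $t/\e^2\le n\le 2t/\e^2$ we obtain $2t\le n\bar\sigma^2\le 4t\,e^{2t}$. In particular $(\bar\sigma^2n)^{-d/2}\ge (4t\,e^{2t})^{-d/2}=:c_0>0$ and $\sqrt n\,\bar\sigma\ge\sqrt{2t}$, both $\e$-independent. Fix $\rho:=\eta/\sqrt{2t}$; then for $x\in B_\eta$ the point $y:=x/(\sqrt n\,\bar\sigma)$ satisfies $|y|\le\rho$, and on $B_\rho$ the Gaussian obeys $G\ge g_\rho:=(2\pi)^{-d/2}e^{-\rho^2/2}>0$. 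Hence, once $n\ge N_1:=\big\lceil (2C_{BE}/g_\rho)^{2/s}\big\rceil$ (so that $C_{BE}/n^{s/2}\le g_\rho/2$), we have $\mathfrak f_n(y)\ge g_\rho-g_\rho/2=g_\rho/2$, and unscaling,
\[
  \Jn(x)=(\bar\sigma^2n)^{-d/2}\,\mathfrak f_n\!\big(x/(\sqrt n\,\bar\sigma)\big)\ge c_0\,\frac{g_\rho}{2}=:A\qquad(x\in B_\eta),
\]
valid whenever $n\ge\max\{N,N_1\}$. Finally, since $n\ge t/\e^2$, the choice $\e_0:=\sqrt{t/\max\{N,N_1\}}$ ensures $n>\max\{N,N_1\}$ for every $\e<\e_0$, and both $\e_0$ and $A$ depend only on $t,\eta,d$ and $J$, which is exactly the assertion.

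The two genuinely delicate points are: (a) the uniformity of $C_{BE}$ --- this is precisely why one needs $L/l=e^t$ to be $\e$-independent, and why one must check the remaining parameters $C_{BE}$ depends on are intrinsic to $J$; and (b) the unscaling --- a priori Theorem~\ref{infBE} only controls $\mathfrak f_n$ near the origin in the rescaled variable, but the two-sided bound $2t\le n\bar\sigma^2\le 4t\,e^{2t}$ shows that the rescaling maps $B_\eta$ into the \emph{fixed} ball $B_{\eta/\sqrt{2t}}$, so a single $\e$-independent lower bound for $G$ there is all that is required. Everything else is bookkeeping.
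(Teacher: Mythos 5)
Your proposal is correct and follows essentially the same route as the paper: rescale $\Jn$ so that the uniform Berry--Esseen estimate of Theorem~\ref{infBE} applies (with $L/l=e^t$ independent of $\e$, $n$, and the $t_i$), bound $n\bar\sigma^2$ above and below by $\e$-independent quantities using $t/\e^2\le n\le 2t/\e^2$, and then unscale. You are slightly more careful than the paper's writeup in tracking the normalisation from covariance $2\,\mathrm{Id}$ to $\mathrm{Id}$ via $f=2^{d/2}J(\sqrt2\,\cdot)$; this only changes harmless constants.
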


\begin{proof} 
  Define the independent random vectors $X_i\sim J_{\e e^{t_i}}$ for
  which
  $$\ird x J_{\e e^{t_i}}\d x=0\qquad \ird x_jx_k J_{\e e^{t_i}}(x)\d
  x=2\e^2e^{2t_i}\delta_{jk}.$$ This implies that
  $S_n=\sum_{i=1}^nX_i \sim \Jn $ and
$$
\ird x\Jn (x)\d x=0\qquad\ird x_jx_k\Jn(x)\d x=2\e^2 n \bar\tau^2\delta_{jk},
$$
where $\bar\tau^2:=\frac{1}{n}\sum_{i=1}^n e^{2t_i}$ and $1\le \bar\tau^2\le e^{2t}$.
Notice that $$l:=\e\le \e e^{t_n}\le\dots\le \e e^{t_1}\le \e e^t:=L$$ and thus $\frac{L}{l}=e^{t}.$
Let $N>0$ be the threshold for which the Berry-Esseen estimate
\ref{infBE} holds, and define $\e_1=\sqrt{\frac{t}{N}}$.
Let $\e_2>0$ be such that 
$$\Big[G\Big(\frac{\eta}{\sqrt{2t}}\Big)
-\frac{ C_{BE}}{t^{s/2} }\e_2^s\Big]\ge\frac{1}{2}G\Big(\frac{\eta}{\sqrt{2t}}\Big).$$
Take $\e_0:=\min\{\e_1,\e_2\}$. For all $\e<\e_0$, we can apply
Theorem \ref{infBE}, since, by hypothesis,
$n\ge \frac{t}{\e^2}\ge\frac{t}{\e_1^2}=N $.  Hence, since $G$ is
radially decreasing,
\begin{equation}
  \begin{split}
    \Jn (x)
    &\ge (2\e^2
      n\bar\tau^2)^{-d/2}\Big[G\Big(\frac{x}{\sqrt{2n}\e\bar\tau}\Big)-\frac{C_{BE}}{n^{s/2}}\Big]
    \\
    &\ge (2\e^2
      n\bar\tau^2)^{-d/2}\Big[G\Big(\frac{\eta}{\sqrt{2t} \bar
      \tau}\Big)-\frac{ C_{BE}}{t^{s/2}}\e^s\Big]
    \\
     &\ge (2\e^2
      n\bar\tau^2)^{-d/2}\Big[G\Big(\frac{\eta}{\sqrt{2t} \bar
      \tau}\Big)-\frac{ C_{BE}}{t^{s/2} }\e_2^s\Big]
    \\
    &\ge
    (4te^{2t})^{-d/2}\frac{1}{2}G\Big(\frac{\eta}{\sqrt{2t}}\Big):=A.
    \qedhere
  \end{split}
\end{equation}
%
\end{proof}
\color{black}
If $\e$ has a positive lower bound $\e_0$, then we prove the following
Lemma in the same spirit:
\begin{lemma}
  \label{Nlargee}
  Let $J$ satisfies \eqref{eq:J-moments-basic} and \eqref{eq:smoment}.
  Take $\e_0 \in (0,1]$, $t>0$ and $\eta>0$. Then, there exist an
  explicit $N\in \mathbb{N}, B>0$ such that for all $n\ge N$ and
  $\e \in [\e_0,1]$,
  $$\Jn(x)\ge\frac{B}{n^{\frac{d}{2}}}=:B_n\qquad \text{for all  }x\in B_\eta$$
  where $B$  depends on   $\e_0$, $t$, $d$, $\eta$ and the Berry-Esseen constant.
\end{lemma}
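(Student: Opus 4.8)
The plan is to argue by the same strategy as in Lemma~\ref{smalle2}: reduce the claim to the $L^\infty$ Berry--Esseen estimate of Theorem~\ref{infBE} and then invert the scaling. The only structural change is that here $\e$ is confined to $[\e_0,1]$, which keeps the normalising scale produced by Theorem~\ref{infBE} bounded both above and below, so the Gaussian it produces is evaluated at an argument of size $O(n^{-1/2})$ and contributes only a bounded factor.

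First I would normalise the covariance exactly as in the remark following \eqref{assBE}: set $\widetilde J(x):=2^{d/2}J(\sqrt2\,x)$, which satisfies \eqref{assBE} with the same $s$ (its moment of order $2+s$ being $2^{-(2+s)/2}\rho_{2+s}$), lies in $L^1\cap L^p$, and obeys $J_a=\widetilde J_{\sqrt2\,a}$ for every $a>0$. (If $J$ is only in $L^\infty$, replace $p$ by any finite exponent in $(1,\infty)$, using $J\in L^1\cap L^\infty$.) Then $\Jn=\widetilde J_{\sigma_1}*\cdots*\widetilde J_{\sigma_n}$ with $\sigma_i:=\sqrt2\,\e\,e^{t_i}$, and since every $t_i\in[0,t]$ and $\e\in[\e_0,1]$ one may take $l=\sqrt2\,\e_0$ and $L=\sqrt2\,e^{t}$ in Theorem~\ref{infBE}, so $L/l=e^{t}/\e_0$, independent of $\e$, of $n$ and of the $t_i$. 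Writing $\bar\sigma^2:=\frac1n\sum_{i=1}^n\sigma_i^2=2\e^2\bar\tau^2$ with $\bar\tau^2:=\frac1n\sum_{i=1}^n e^{2t_i}\in[1,e^{2t}]$, I would apply Theorem~\ref{infBE} to $\mathfrak f_n(x):=(\bar\sigma^2 n)^{d/2}\Jn(\sqrt n\,\bar\sigma x)$: it yields an explicit integer $N_0=N_0(p)$ and a constant $C_{BE}$, depending only on $p$, $d$, $\rho_{2+s}$, $\|\widetilde J\|_{L^p}$ and $L/l=e^{t}/\e_0$ (hence on none of $\e$, $n$, $t_i$), with $\|\mathfrak f_n-G\|_{L^\infty}\le C_{BE}\,n^{-s/2}$ for all $n\ge N_0$.

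Next I would undo the rescaling. For $x\in B_\eta$ and $n\ge N_0$,
\[
  \Jn(x)=(\bar\sigma^2 n)^{-d/2}\,\mathfrak f_n\!\big(x/(\sqrt n\,\bar\sigma)\big)\ \ge\ (\bar\sigma^2 n)^{-d/2}\Big[\,G\big(x/(\sqrt n\,\bar\sigma)\big)-C_{BE}\,n^{-s/2}\,\Big].
\]
Since $\bar\sigma\ge\sqrt2\,\e_0$ we have $|x/(\sqrt n\,\bar\sigma)|\le\eta/(\sqrt{2n}\,\e_0)$, so, $G$ being radially decreasing,
\[
  G\big(x/(\sqrt n\,\bar\sigma)\big)\ \ge\ (2\pi)^{-d/2}e^{-\eta^2/(4\e_0^2 n)}\ \ge\ (2\pi)^{-d/2}e^{-\eta^2/(4\e_0^2)}\ =:\ 2c_0\ >\ 0\qquad\text{for all }n\ge1 ,
\]
while $\bar\sigma^2 n=2\e^2\bar\tau^2 n\le 2e^{2t}n$. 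Choosing $N:=\max\{N_0,\ \cl{(C_{BE}/c_0)^{2/s}}\}$ forces $C_{BE}\,n^{-s/2}\le c_0$ for $n\ge N$, so the bracket is $\ge c_0$ and therefore
\[
  \Jn(x)\ \ge\ c_0\,(2e^{2t})^{-d/2}\,n^{-d/2}\ =:\ \frac{B}{n^{d/2}}\qquad\text{for }x\in B_\eta,\ n\ge N ,
\]
with $B=\tfrac12(4\pi e^{2t})^{-d/2}e^{-\eta^2/(4\e_0^2)}$. Both $N$ and $B$ are explicit and depend only on $\e_0$, $t$, $d$, $\eta$ and the Berry--Esseen constant $C_{BE}$, as claimed.

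I do not expect a genuine obstacle: the proof is a direct adaptation of that of Lemma~\ref{smalle2}, and the only thing to watch is that every constant stays uniform in $\e\in[\e_0,1]$, in $n\ge N$, and in the admissible times $t_i$. Both endpoints of $[\e_0,1]$ enter: $\e\le1$ keeps the normalising factor $\bar\sigma^2 n=2\e^2\bar\tau^2 n$ below $2e^{2t}n$, while $\e\ge\e_0$ keeps $\bar\sigma$ above $\sqrt2\,\e_0$, so that $G$ evaluated at $x/(\sqrt n\,\bar\sigma)$ (with $x\in B_\eta$) cannot collapse to $0$. (In the complementary range $\e<\e_0$ of Lemma~\ref{smalle2} this same control is instead provided by the tight coupling $n\approx t/\e^2$, which keeps $\e^2 n=\Theta(t)$ — which is exactly why there one even obtains a lower bound by a constant rather than by $n^{-d/2}$.) The shrinking of the argument $x/(\sqrt n\,\bar\sigma)$ as $n\to\infty$ is harmless, since it merely drives $G$ towards $G(0)=(2\pi)^{-d/2}$, and all the $n$-decay in the conclusion is carried by the prefactor $(\bar\sigma^2 n)^{-d/2}$, which is precisely the $n^{-d/2}$ of the statement.
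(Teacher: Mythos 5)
Your proof is correct and follows essentially the same route as the paper's: apply Theorem~\ref{infBE} with $l,L$ uniform over $\e\in[\e_0,1]$ and $t_i\in[0,t]$, invert the scaling, use $\e\ge\e_0$ and $\bar\tau\ge 1$ to keep the Gaussian argument bounded, $\e\le 1$ and $\bar\tau\le e^t$ to control the $(\bar\sigma^2 n)^{-d/2}$ prefactor, and pick $N$ large enough that the Berry--Esseen remainder is dominated by half the Gaussian value. Your write-up is in fact a bit more careful than the paper's on the covariance renormalisation (the explicit $\widetilde J(x)=2^{d/2}J(\sqrt 2\,x)$ step and the resulting $\sqrt{2n}\,\e\bar\tau$ in the argument of $G$, rather than the $\sqrt n\,\e\bar\tau$ written in the paper), and you give $N$ by an explicit formula rather than implicitly, but these are cosmetic refinements of the same argument.
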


\begin{proof}
Let $N=\max\{N_1,N_2\}$,
where $N_1$ is the threshold for which estimate in Theorem \ref{infBE} holds, and $N_2$ is such that  $$G\Big(\frac{\eta}{\sqrt{2}\e_0}\Big)-\frac{C_{BE}}{N_2^{s/2}}\ge\frac{1}{2}G\Big(\frac{\eta}{\sqrt{2}\e_0}\Big).$$
Then for $n\ge N$, applying Berry - Esseen CLT, we obtain

\begin{equation*}
    \begin{split}
    \Jn (x)
    &\ge (2\e^2
      n\bar\tau^2)^{-d/2}\Big[G\Big(\frac{x}{\sqrt{2n}\e\bar\tau}\Big)-\frac{C_{BE}}{n^{s/2}}\Big]
    \\
    & \ge \frac{1}{n^{\frac{d}{2}}}\Big(\frac{1}{\sqrt{2}e^t}\Big)^d\Big[G\Big(\frac{\eta}{\sqrt{2}\e_0}\Big)-\frac{C_{BE}}{N_2^{s/2}}\Big]\\
    & \ge \frac{1}{n^{\frac{d}{2}}}\Big(\frac{1}{\sqrt{2}e^t}\Big)^d \frac{1}{2}G\Big(\frac{\eta}{\sqrt{2}\e_0}\Big)=:\frac{B}{n^{\frac{d}{2}}}.
    \end{split}
\end{equation*}   
\end{proof}
The last ingredient we need to prove Theorem \ref{posit} is
 the following lemma on the partial sums of the densities of Poisson-distributed random variables.
\begin{lemma}
  \label{2n}
  There exists an explicit constant $C_L$ such that
  $$s_m := e^{-m}\sum_{n=m}^{2m}\frac{m^n}{n!}\ge C_L
  \qquad \text{for all $m \geq 1$,}$$ and
  $\lim_{m\to\infty}s_m=\frac{1}{2}.$
\end{lemma}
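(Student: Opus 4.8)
The plan is to recognise $s_m$ as a Poisson tail probability and then combine an elementary Chebyshev bound with the central limit theorem. Let $N_m$ be a random variable with the Poisson law of parameter $m$; since $N_m$ has the same law as a sum $Y_1+\dots+Y_m$ of i.i.d.\ Poisson$(1)$ variables, one has $\E N_m=\Var N_m=m$ and
\begin{equation*}
  s_m = e^{-m}\sum_{n=m}^{2m}\frac{m^n}{n!} = \P\big(m\le N_m\le 2m\big).
\end{equation*}
First I would establish the limit. Write $s_m=\P(N_m\le 2m)-\P(N_m\le m-1)$. By Chebyshev's inequality, $\P(N_m>2m)\le \Var N_m/m^2=1/m$, so $\P(N_m\le 2m)\to 1$. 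For the remaining term, the central limit theorem says $(N_m-m)/\sqrt m$ converges in distribution to a standard Gaussian with c.d.f.\ $\Phi$; since $\P(N_m\le m-1)=\P\big((N_m-m)/\sqrt m\le -1/\sqrt m\big)$ and $-1/\sqrt m\to 0$, sandwiching this between $\P\big((N_m-m)/\sqrt m\le \mp\delta\big)$ for fixed $\delta>0$ and then letting $\delta\downarrow 0$ gives $\P(N_m\le m-1)\to \Phi(0)=\tfrac12$. Hence $s_m\to 1-\tfrac12=\tfrac12$. The one point to watch here is that the lower summation index $n=m$ sits exactly at the mean of $N_m$, so it is precisely this endpoint that accounts for the mass $\tfrac12$ in the limit.

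For the uniform lower bound I would take a quantitative route so that $C_L$ stays explicit. For integer parameter $m$ one has the classical inequality $e^{-m}\sum_{n=0}^{m-1}\frac{m^n}{n!}<\tfrac12$ (equivalently, the median of the Poisson$(m)$ law is $m$), i.e.\ $\P(N_m\ge m)\ge \tfrac12$; this can be cited or checked directly. Combining it with $\P(N_m\ge 2m+1)=\P(N_m>2m)\le 1/m$ from Chebyshev and the identity $s_m=\P(N_m\ge m)-\P(N_m\ge 2m+1)$ yields
\begin{equation*}
  s_m \ge \tfrac12-\tfrac1m,
\end{equation*}
so $s_m\ge \tfrac16$ for every $m\ge 3$. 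The two remaining values are computed by hand, $s_1=\tfrac{3}{2e}$ and $s_2=4e^{-2}$, both exceeding $\tfrac16$, so one may take $C_L=\tfrac16$. (If one prefers to avoid the median fact, it suffices to note that $s_m\to\tfrac12>0$ while $s_m>0$ for every $m$, so $\inf_m s_m=:C_L>0$; choosing $M$ with $s_m\ge\tfrac14$ for $m\ge M$ from the limit gives the explicit value $C_L=\min\{\tfrac14,s_1,\dots,s_{M-1}\}$.)

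The argument is entirely elementary, so I do not expect a genuine obstacle; the only mild subtleties are the correct treatment of the endpoint $n=m$ at the mean (as noted above), and deciding how quantitative to be in order to present $C_L$ as ``explicit'', for which the median inequality is the cleanest input.
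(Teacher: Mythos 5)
Your proposal is correct and takes essentially the same approach as the paper: both recognise $s_m=\P(m\le X_m\le 2m)$ for $X_m\sim\mathrm{Poisson}(m)$, both use the Poisson median inequality $\P(X_m< m)\le\tfrac12$ together with an upper tail bound to get the uniform lower bound, and both use the CLT (applied to the lower endpoint $n=m$, which sits at the mean and carries the mass $\tfrac12$) to get the limit. The only difference is that you bound $\P(X_m>2m)$ by Chebyshev ($\le 1/m$), while the paper uses a Chernoff-type Poisson tail bound $\P(X_m>2m)\le e^{m+1}\big(\tfrac{m}{2m+1}\big)^{2m+1}$, giving a slightly sharper explicit $C_L$; this is a cosmetic difference, not a different route.
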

\begin{proof}
  Let $X_m$ be a Poisson random variable of parameter $m$, so that
  \begin{equation*}
    s_m = \mathbb{P}(m \leq X_m \leq 2m).
  \end{equation*}
  For a fixed $m$, using that $\mathbb{P}(X_m \geq m) \geq 1/2$ and the
  Chernoff-type bound $\mathbb{P}(X_m \geq x) \leq e^{-m} (e m)^x /
  x^x$ (for $x > m$), we have that
  \begin{equation*}
    s_m = \mathbb{P}(X_m \geq m) - \mathbb{P}(
    X_m \geq 2m+1) \ge \frac{1}{2}- e^{m+1 }
    \Big(\frac{m}{2m+1}\Big)^{2m+1} := b_m \geq b_1 =: C_L,
  \end{equation*}
  since $b_m$ is increasing for $m\ge 1$. To prove that $s_m \to 1/2$,
  let $Y_i,$  $i=1,\dots, m$ be Poisson-distributed random variables with
  parameter $1$ so that $X_m= \sum_{i=1}^mY_i$. We have
  \begin{equation*}\begin{split}
      &e^{-m}\sum_{n=m}^{2m}\frac{m^n}{n!}=e^{-m}\sum_{n=0}^{2m}\frac{m^n}{n!}-e^{-m}\sum_{n=0}^{m-1}\frac{m^n}{n!}
			\\
			&= \mathbb{P}\Big(\sum_{i=1}^mY_i-m\le{m} \Big)-\mathbb{P}\Big(\frac{1}{\sqrt{m}} \big({\sum_{i=1}^mY_i-m}\big)\le -\frac{1}{\sqrt{m}}\Big)\ \rightarrow1-\frac{1}{2}= \frac{1}{2},
		\end{split}
	\end{equation*}
	thanks to the law of large numbers and the central limit theorem.
\end{proof}

\begin{remark}
  For our proofs we will use the above lemma as stated. However, with
  a very similar proof one can show that for any $r > 1$, the lemma
  still holds with a partial sum from $m$ to $\cl{rm}$ (instead of $m$
  to $2m$), with the same limit of $1/2$.
\end{remark}

Proving Theorem \ref{posit} is now straightforward:

\begin{proof}[Proof of Theorem \ref{posit}]
Expressing the solution via Wild sums we have
$$
u(t,x)\ge e^{(d-\frac{1}{\e^2})t} \sum_{n=1}^\infty \e^{-2n}\int_0^t\dots\int_0^{t_{n-1}}J_\e^{t_1,\dots,t_n}*u_0(e^tx) \d t_n\dots \d t_1.
$$
Set $\eta=e^tR_1+R_2$ and take $\e_0$ from Lemma \ref{smalle2}. If
$\e\le\tilde{\e}_0:= \min \{\e_0, \sqrt{t}\}$, applying Lemma
\ref{smalle2} we have
\begin{equation*}\label{eq: finalsmall}
\begin{split}
u(t,x)&\ge e^{(d-\frac{1}{\e^2})t} \sum_{n=1}^\infty \e^{-2n}\int_0^t\dots\int_0^{t_{n-1}}\ird J_\e^{t_1,\dots,t_n}(e^tx-y)u_0(y)\d y \d t_n\dots \d t_1\\
&\ge e^{(d-\frac{1}{\e^2})t} \sum_{n=\fl{\frac{t}{\e^2}}}^{2\fl{\frac{t}{\e^2}}}\e^{-2n}\int_0^t\dots\int_0^{t_{n-1}}\int_{B_{R_2}}J_\e^{t_1,\dots,t_n}(e^tx-y)u_0(y)\d y \d t_n\dots \d t_1\\
&\ge A e^{d t}e^{-\frac{t}{\e^2}} \sum_{n=\fl{\frac{t}{\e^2}}}^{2\fl{\frac{t}{\e^2}}}
\Big(\frac{t}{\e^2}\Big)^n\frac{1}{n!} \int_{B_{R_2}}u_0(y)\d y\\
&\ge \Big(A e^{\d t}e^{-(\frac{t}{\e^2}-\fl{\frac{t}{\e^2}})}\int_{B_{R_2}}u_0(y)\d y\Big)e^{-\fl{\frac{t}{\e^2}}} \sum_{n=\fl{\frac{t}{\e^2}}}^{2\fl{\frac{t}{\e^2}}}
\fl{\frac{t}{\e^2}}^n\frac{1}{n!}\\
&\ge \Big(A e^{d t-1}\int_{B_{R_2}}u_0(y)\d y\Big)e^{-m}\sum_{n=m}^{2m}\frac{m^n}{n!}\ge A C_Le^{dt-1} \int_{B_{R_2}}u_0(y)\d y
\end{split}
\end{equation*} 
with $m=\fl{\frac{t}{\e^2}}\ge 1$ and $C_L$ the constant found in
Lemma \ref{2n}. Notice that we have used $\tilde{\e}_0 \leq \e_0$ in
order to apply Lemma \ref{smalle2}, and also
$\tilde{\e}_0 \leq \sqrt{t}$ to make sure that the bound we obtain is
not trivial (that is, to avoid $m=0$).

If $\e\ge \tilde \e_0$ then, applying Lemma \ref{Nlargee} to $\tilde \e_0$, for all $x\in B_\eta$ we obtain
\begin{equation*}\label{eq: finallarge}
    \begin{split}
        u(t,x)&\ge e^{(d-\frac{1}{\e^2})t} \sum_{n=1}^\infty \e^{-2n}\int_0^t\dots\int_0^{t_{n-1}}\ird J_\e^{t_1,\dots,t_n}(e^tx-y)u_0(y)\d y \d t_n\dots \d t_1\\
        & \ge e^{(d-\frac{1}{\e^2})t} \sum_{n=1}^\infty \e^{-2n}\int_0^t\dots\int_0^{t_{n-1}}\int_{B_{R_2}} J_\e^{t_1,\dots,t_n}(e^tx-y)u_0(y)\d y \d t_n\dots \d t_1\\
        &\ge e^{(d-\frac{1}{\e^2})t} \sum_{n=N}^\infty
        \Big(\frac{t}{\e^2}\Big)^{n}\frac{1}{n!}B_n
        \int_{B_{R_2}}u_0(y)\d y
        \\
        &\ge \frac{B}{N!} e^{(d-\frac{1}{\tilde \e_0^2})t} \Big(\frac{t}{\e_0^2}\Big)^{N}\frac{1}{N^{\frac{d}{2}}} \int_{B_{R_2}}u_0(y)\d y.
    \end{split}
\end{equation*}
We finally take
$$\alpha:=\min\Big\{A C_Le^{dt-1}, \frac{B}{N!}
e^{(d-\frac{1}{\e_0^2})t} \Big(\frac{t}{\tilde
  \e_0^2}\Big)^{N}\frac{1}{N^{\frac{d}{2}}}\Big\}.$$
\end{proof}

\begin{remark}
  The previous proof of positivity works almost without changes for
  the equation
  \begin{equation*}
    \partial_t u=\frac{1}{\e^2}(J_\e*u-u)+\div (\nabla V u)
  \end{equation*}
  for any, even \textit{time-dependent}, potential
  $V:\R^d\times\R \to \R$ such that
  $$\frac{d}{\d t}X(t)=\nabla V (X(t),t)$$ is a linear ODE. It seems
  harder to extend  straightforwardly the proof to potentials with different
  expressions.
\end{remark}

\subsection*{Proof of the main result}
We have now all the ingredients to prove Theorem \ref{spectral gap}.

\begin{proof}
  The proof comes straightforwardly thanks to Harris's Theorem
  \ref{sHarris}.  From Theorem \ref{posit}, at every time $t>0$ and
  for every $R_1,R_2>0$, the operator $S_t$ satisfies Hypothesis
  \eqref{eq:HLB}:
  $$S_t u_0 \ge \alpha \mathds{1}_{B_{R_1}}\int_{\mathcal{S}}u_0(y)\d y$$
  on the ``small set'' $\mathcal{S}=B_{R_2}$.

  Theorem \ref{thm: finalLyap} gives the confining Lyapunov conditions in both case $(i)$ and $(ii)$, concluding the proof
\end{proof}

\begin{remark}[On the dependence of the constants]
As mentioned in the introduction, Harris's theorem provides a constructive method that yields explicit—though generally nonoptimal—constants. 
Unsurprisingly, the kernel $J$ plays a central role in determining these constants. Indeed, the latter get better the closer $J$ resembles a Gaussian distribution. This is reflected in our use of the Berry–Esseen theorem, which introduces an explicit dependence on both the Berry–Esseen constant and the number $N$ of convolution iterations. We also note that if $J$ were itself a Gaussian, positivity would follow directly and there would be no need to invoke any central limit theorem. \end{remark}

\section{Convergence: from nonlocal to local}
\label{sec:nonlocal-to-local}

As a consequence of Theorem \ref{spectral gap}, proved in Section
\ref{sec:asymptotic-behavior}, will now prove results on convergence
of the nonlocal Fokker-Planck equation \eqref{NLFP} to the local one
\eqref{eq:FP}. We present proofs which give a speed of convergence of
order $\e^2$, assuming the ``symmetry'' condition \eqref{eq:Jsym} on
the kernel $J$. As we mentioned in Remark
\ref{rem:order-of-convergence}, one may make minor modifications in
the proofs below in order to avoid condition \eqref{eq:Jsym}, at the
price of obtaining a slower speed of convergence of order $\e$. Since
the proofs are very similar, for the sake of brevity we present only
the $\e^2$ case. We will thus assume \eqref{eq:Jsym} for the whole section.

We follow a strategy of proof which is common in numerical
analysis. We  begin by proving suitable consistency and stability
results in order to prove convergence for finite times, assuming nice
regularity and decay hypotheses.

\begin{proposition}[Consistency]\label{prop: consistency}   
Take $k\ge 0$. Assume $J$ satisfies
  \eqref{eq:J-moments-basic}, \eqref{eq:Jsym}, and suppose additionally
  $ J\in L^1_{k+d+4}$.  Let $v\in \mathcal{C}^4\cap L^1_k$ be a function such
  that there exist
  $c > 0$ and $K>k+d$ with $|D^\alpha v(x)| \le c\ap{x}^{-K}$ for
  all multiindices $\alpha$ with $|\alpha|=4$.
  
  Then, there exists a constant $C=C(K,d,k, \norm{J}_{L^1_{4+k+d}},c)>0$ such that
  \begin{equation*}
    \norm{(L_\e-L_0)v}_{L^1_k}\le C\e^2
  \end{equation*}
\end{proposition}
\begin{proof}
    To lighten the notation we denote by $\f=\ap{\cdot}^{-K}$.
We write (understanding that all operators are evaluated at $x \in \R^d$) 
$$
L_\e v -L_0v
=
\frac{1}{\e^2}[J_\e*v-v]-\Delta v=\ird J(z)[\frac{1}{\e^2}(v(x-\e z)-v(x))-\Delta_x v]\d z,$$
and we Taylor-expand $v$ around $x$ up to the fourth order:
\begin{equation*}
  v(x-\e z)=\sum_{|\alpha|\le 3} D^\alpha v(x) \frac{(-\e z)^\alpha}{\alpha!}+\sum_{|\alpha|=4}D^\alpha v(\xi_{x, \e z}) \frac{(-\e z)^\alpha}{\alpha!}
\end{equation*}
with $\xi_{x, \e z}\in[x, x-\e z]$. From assumptions \eqref{eq:J-moments-basic} and \eqref{eq:Jsym}  when integrating in $z$,  for 
$\alpha$ with odd entry $\alpha_i$, all terms in the first sum disappear. This leads to 
\begin{equation*}
\begin{split}
    &\abs{L_\e v-L_0v}=\abs{\ird J(z)[\frac{1}{\e^2}(v(x-\e z)-v(x))-\Delta_x v]\d z}\\
&\le \e^2\ird J(z)\sum_{|\alpha|=4}\frac{1}{\alpha!}\abs{D^\alpha v}(\xi_{x, \e z})|z|^4 \d z  \le c\e^2\int J(z)\f(\xi_{x, \e z})|z|^4\d z.
    \end{split}
\end{equation*}
Thus, we are left with 
\begin{equation*}
\begin{split}
    \norm{(L_\e-L_0)v}_{L^1_k}&\le c\e^2\ird(1+|x|^k)\ird J(z)\f(\xi_{x, \e z})|z|^4 \d z \d x\\
    &= c\e^2\ird J(z)|z|^4\ird \f(\xi_{x, \e z}) (1+|x|^k) \d x \d z.
    \end{split}
\end{equation*}
We split the inner integral:
\begin{equation*}
  \ird \f(\xi_{x, \e z}) (1+|x|^k) \d x
  =
  \int_{|x|\le  2\e |z|}\f(\xi_{x, \e z}) (1+|x|^k) \d x
  +\int_{|x|> 2 \e |z|}\f(\xi_{x, \e z}) (1+|x|^k) \d x.
\end{equation*}
The first term can be bounded by
\begin{equation*}\label{eq: splitint1}
    \int_{|x|\le  2 \e |z|}\f(\xi_{x, \e z}) (1+|x|^k) \d x \le \f(0) (1+|2\e|^k |z|^{k})\abs{B_{2\e |z|}}\le  C_1\e^d|z|^{d}+C_2\e^{k+d}|z|^{ k+d}
\end{equation*}
Since in the second integral $\e |z|<\frac{|x|}{2}$ and $\f$ is radial, we can bound
\begin{equation*}\label{eq: splitint2}
\begin{split}
   &\int_{|x|> 2\e |z|}\f(\xi_{x, \e z}) (1+|x|^k) \d x\le   \int_{|x|> 2\e |z|}\f(|x|-\e|z|) (1+|x|^k) \d x\\
   &\le \int_{|x|> 2\e |z|}\f(x/2)(1+|x|^k)\d x
   \le 2^{k+d}\norm{\f}_{L^1_k}
   \end{split}
\end{equation*}
Then 
\begin{equation*}
    \begin{split}
         &\norm{(L_\e-L_0)v}_{L^1_k}\le c \e^2\ird J(z)|z|^4\big[ 2^{k+d}\norm{\f}_{L^1_k}+ C_2\e^d|z|^{d}+C_3\e^{k+d}|z|^{ k+d}   \big]\d z\\
         &\le c\e^2\Big[C_1\ird J(z)|z|^{4}\d z+C_2\e^d\ird J(z)|z|^{4+d}\d z+C_3\e^{k+d}\ird J(z)|z|^{4+k+d}\d z\Big]\\
         &\le  c \e^2\Big(C_1\norm{J}_{L^1_4}+C_2\e^d\norm{J}_{L^1_{4+d}}+C_3\e^{k+d}\norm{J}_{L^1_{4+k+d}}\Big) \le C\e^2
    \end{split}
\end{equation*}
where $C$ only depends on $c, k, K, d,$, and $\norm{J}_{L^1_{4+k+d}}$.
\end{proof}

\begin{remark}Notice that for any $v$ such that $D^\alpha v=0$ with $|\alpha|\le4$, like polynomials of order $3$ or lower, the local and the nonlocal operators agree.
\end{remark}

We now study the stability of our nonlocal PDE under small
perturbations:

\begin{proposition}[Stability]\label{prop: stability}
    Let $u\in \mathcal{C}([0,\infty);L^1_k )$ be the solution of 
\begin{equation*}        
        \partial_tu=L_\e u       
\end{equation*}
Let $h,v\in \mathcal{C}([0,\infty);L^1_k )$ 
satisfy in the mild sense
\begin{equation*}
    \partial_t v=L_\e v+h(t)
\end{equation*}
with the same initial data $u(0)=v(0)=u_0.$ Then,
there exists  constants $\omega>0,M\ge 1$ depending only on $k$ and $d$ such that 
\begin{equation*}
  \norm{u(t)-v(t)}_{L^1_k}
  \le
  M\int_0^te^{\omega (t-s)}\norm{h(s)}_{L^1_k}\d s
  \qquad \text{for all $t \geq 0$.}
\end{equation*}
\end{proposition}

\begin{proof}
$\psi:=v-u$ solves
\begin{equation*}
  \begin{cases}
    \partial_t \psi=L_\e \psi+h(t)\\
    \psi(0)=0
  \end{cases}
\end{equation*}
Using Duhamel's formula we have 
\begin{equation*}\label{eq: stab1}
\begin{split} 
  \norm{\psi(t)}_{L^1_k}
  &=\norm{\int_0^t e^{(t-s)L_\e}h(s) \d s}_{L_1^k}
    \le
    M \int_0^te^{\omega  (t-s)}\norm{h(s)}_{L^1_k}\d s,
\end{split}
\end{equation*} since for a semigroup there exist $\omega >0$, $M\ge 1$, such that
\begin{equation*}
  \norm{e^{t L_\e} f}_{L^1_k}\le  M e^{\omega t}\norm{f}_{L^1_k}.
  \qedhere
\end{equation*}
\end{proof}

\begin{remark}
  One could improve the growth rate in the above result by a more
  careful estimate of $e^{(t-s)L_\e}h(s)$. However, since the point of
  this is to later obtain a local-in-time bound, we give only the
  above rough estimate.
\end{remark}

We leave here a standard result of the decay of the derivatives of the solution of the Fokker-Planck equation.
\begin{lemma}\label{lemma: decay derivative}
Let $u$ be the solution of Fokker-Planck equation with initial data $u_0$, $K \geq 0$ and $\alpha$ a multiindex. If $u_0$ is such that
$$
\abs{D^\alpha u_0(x)}\le c\ap{x}^{-K}
$$
then for every $T^* > 0$ there exists a constant $C_{T^*}$ depending only on $T^*$, $K$, $\alpha$ and the dimension $d$ such that 
$$
\abs{D^\alpha u(x)}\le C_{T^*} \ap{x}^{-K}
\qquad \text{for all $0 \leq t \leq T^*$.}
$$
\begin{proof}[Sketch of proof]
Since $u(t, x) = e^{dt} v(\frac12 (e^{2t}-1), x e^t) $ where $v$ is a solution to the standard heat equation with the same initial data, it is easier to prove the corresponding result for $v$. Since
$$v = \Gamma_t * u_0,$$
where $\Gamma_t$ is the standard heat kernel, passing the derivative to the initial data one easily obtains the result for $v$. Then going back to $u$ one finds a time dependent constant $C_t$ such that 
$$
\abs{D^\alpha u(x)}\le C_{t} \ap{x}^{-K}
$$
Taking the supremum over $[0,T^*]$ the proof is complete.    
\end{proof}
\end{lemma}

We can therefore prove the following:

\begin{proposition}[Local-in-time convergence]
  \label{prop:convfinite}
  Let $J$ satisfy assumptions \eqref{eq:J-moments-basic},
  \eqref{eq:Jsym}, and assume $J\in L^1_{k+d+4}$ 
    $k \ge 0$.  Let $u$ be the solution of the nonlocal Fokker-Planck
  equation (\ref{NLFP}) and $v$ the solution of the local
  Fokker-Planck equation (\ref{eq:FP}) with the same initial data  $u_0\in L^1_k\cap \mathcal{C}^4$.
  Suppose that there exists
  $c > 0$ and $K>k+d$ with $|D^\alpha u_0(x)| \le c\ap{x}^{-K}$ for
  all multiindices $\alpha$ with $|\alpha|=4$.  Then for all $T^* > 0$ there exist $C, \omega  > 0$ such that,
    \begin{equation*}
      \norm{u(t)-v(t)}_{L^1_k}\le Ce^{\omega T^*}\e^2
      \qquad \text{for $t \in [0,T^*]$.}
    \end{equation*}
\end{proposition}
\begin{proof}
 By Lemma \ref{lemma: decay derivative},  the solution $v=v(t,x)$ of \eqref{eq:FP}
  is in $\mathcal{C}^4$, for every $t\in[0,T^*]$ and 
  $|D^\alpha v(t,x)| \le c\ap{x}^{-K}$ uniformly in $[0,T^*]$,  
 due to our assumptions on the derivatives of $u_0$. Hence, $v$ satisfies the
  hypotheses of Proposition \ref{prop: consistency}, uniformly
    for all $t \in [0,T^*]$.
  We rewrite
\begin{equation*}
    L_0 v=L_\e v+(L_0 v-L_\e v):=L_\e v+h(t).
\end{equation*}
    Then 
    \begin{equation*}
        \norm{u(t)-v(t)}_{L^1_k}\le M e^{\omega t}\int_0^t e^{-\omega s}\norm{L_0v-L_\e v}_{L^1_k}\le M \frac{e^{\omega  t}-1}{\omega }C_1\e^2\le Ce^{\omega T^*}\e^2
    \end{equation*}
    where we used our results on stability of Proposition \ref{prop: stability} in
    the first inequality and the consistency with constant $C_1$ of Proposition \ref{prop: consistency} in the second inequality.
\end{proof}

Once we have studied the convergence for finite times, we  use
Theorem \ref{spectral gap} to gather information on the equilibrium
and the behaviour of the solution at large times. As a first
consequence of the previous results, we can estimate the speed
convergence of the equilibrium $F_\e$ of \eqref{NLFP} to the
equilibrium $G$ of \eqref{eq:FP} as $\e \to 0$. Indeed, having a
spectral gap for an operator (semigroup) provides information of the
norm of the resolvent of the latter, assuring that the norm of the
resolvent remains bounded by a constant.
\begin{proposition}[Convergence of the equilibrium]
  \label{thm: hille}
  Let $k>0$ and suppose $J$ satisfies \eqref{eq:J-moments-basic} and \eqref{eq:Jsym}. Additionally assume $J\in L^1_{k+d+4}$. Then
   \begin{equation*}
       \norm{F_\e-G}_{L^1_k}\le C\e^2
   \end{equation*} 
\end{proposition}
\begin{proof}
  Feller-Miyadera-Philips's general case of Hille-Yosida Theorem for
  semigroups---see \cite[Part 2, Thm. 3.8]{EngelNagel2001}---states
  that having an estimate for a strongly continuous semigroup $(S_t)$ in a Banach space
  $\mathcal{Y}$
of the type
\begin{equation}\label{eq: SGHille}
\norm{S_t}\le Me^{-\gamma t}\qquad \text { for }\gamma>0, M>1,t>0
\end{equation}
(where $\norm{\cdot}$ denotes the operator norm) is equivalent
to the property that for every $\lambda>-\gamma$,
$\lambda $ is in the resolvent set $\rho(L_\e)$ of its infinitesimal generator $(L_\e, \mathcal{D}(L_\e))$, and
\begin{equation}\label{eq: HY}
  \norm{R(\lambda,L_\e)^n}
  \le \frac{M}{(\lambda+\gamma)^n}
  \qquad \text {for every } n\in \mathbb{N},
\end{equation}
where $R$ denotes the resolvent operator.
From Theorem \ref{spectral gap}---in particular from Rmk. \ref{rmk: deacymass0}---the condition \eqref{eq: SGHille} is satisfied in the Banach space $\mathcal{Y}:=\{f\in L^1_k : \ird f=0\}$.
Thus, choosing $n=1$ and $\lambda=0>-\gamma $ in \eqref{eq: HY}, it follows that $L_\e$ has an inverse in $\mathcal{Y}$ with
$\norm{L_\e^{-1}}_{\mathcal{Y}}\le M/\gamma $. This implies that,
\begin{equation*}
\begin{split}
    \norm{F_\e-G}_{L^1_k}&=\norm{F_\e-G}_{\mathcal{Y}}=\norm{L_\e^{-1}L_\e(F_\e-G)}_{\mathcal{Y}}\le
    \norm{L_\e^{-1}}_{\mathcal{Y}}\norm{L_\e (F_\e- G)}_{\mathcal{Y}}
    \\
    &\le
    \frac{M}{\gamma}\norm{L_\e (F_\e-G)}_{L^1_k}=\frac{M}{\gamma}\norm{(L_\e-L_0) G}_{L^1_k},
\end{split}
\end{equation*}
since $L_\e F_\e=L_0G=0$.
Given the regularity and decay properties of the Gaussian, by
consistency \ref{prop: consistency} we obtain
    \begin{equation*}
        \norm{(L_\e-L_0)G}_{L^1_k}\le C\e^2,
    \end{equation*}
    concluding the proof.
\end{proof}
As a consequence of the previous proposition and, again, Theorem \ref{spectral gap} we can estimate the speed of convergence at large times.
\begin{proposition}[Convergence for large times]
  \label{prop:convlargeT}
  Let $J$ satisfy the hypotheses of Theorem \ref{spectral gap} $(i)$,
  Assumption \eqref{eq:Jsym} and, additionally $J\in L^1_{k+d+4}$. Let
  $u$ be the solution of \eqref{NLFP} and $v$ be the solution of
  \eqref{eq:FP} with the same initial data $u_0\in L^1_k$. Then, there
  exist $T^{*}>0$ and $C > 0$ (depending on $d$, $k$,
  $p$, and $u_0$) such that
  $$\norm{u(t,\cdot)-v(t,\cdot)}_{L^1_k}\le C\e^2
  \qquad \text{ for all $t>T^*$.}$$
\end{proposition}

\begin{proof}%
From Theorem \ref{spectral gap}
\begin{equation*}
    \norm{u(t,\cdot)-F_\e}_{L^1_k}\le C_1e^{-\lambda_1 t}\norm{u_0-F_\e}_{L^1_k}.
\end{equation*}
Moreover, we have a similar result for the local case--- see e.g. \cite{Gualdani2018}.
\begin{equation*}
    \norm{v(t,\cdot)-G}_{L^1_k}\le C_2e^{-\lambda_2 t}\norm{u_0-G}_{L^1_k}
\end{equation*}
where $v$ is the solution of the local Fokker Planck equation with the
same initial data. This means
$$\norm{u(t,\cdot)-v(t,\cdot)}_{L^1_k}\le \norm{F_\e-G}_{L^1_k}+Ce^{-\lambda t} \Big(\norm{u_0-F_\e}_{L_1^k}-\norm{u_0-G}_{L_1^k}\Big).$$
From Theorem \ref{thm: hille},
$$ \norm{F_\e-G}_{L^1_k}\le \tilde C\e^2.$$
Let us define $T^*$ such that 
$$e^{-\lambda T^*} \Big(\norm{u_0-F_\e}_{L^1_k}-\norm{u_0-G}_{L^1_k}\Big)\le \e^2.$$
Then for all $t>T^*$
\begin{equation*}
  \norm{u(t,\cdot)-v(t,\cdot)}_{L^1_k}\le C\e^2.
  \qedhere
\end{equation*}
\end{proof}
We finally combine the previous results to give the proof of Theorem \ref{thm:asymptconv}:

\begin{proof}[Proof of Theorem \ref{thm:asymptconv}]
  The proof comes directly from the previous estimates in this
  section. Indeed, take $T^*$ as in Proposition \ref{prop:convlargeT}.
  If $t\in[0,T^*]$, then apply Proposition \ref{prop:convfinite}. If
  $t>T^*$, then Proposition \ref{prop:convlargeT} gives the result.
\end{proof}
\section{Analysis of the equilibrium: regularity, moments, cumulants
  and tails}
\label{sec: moments}

Thanks to Harris's Theorem we have proved the existence and uniqueness
of an exponentially stable equilibrium for the equation \eqref{NLFP}
at each $F_\e$. We can infer some information about its regularity and
shape, especially on the cumulants, moments and tails.

\subsection{Regularity}
\label{sec:regularity}

Thanks to the  explicit form \eqref{eq: eqfourier} of the
equilibrium in the frequency domain, one can study the regularity by
standard results on the Fourier transform.

Recall that  the Fourier transform of the equilibrium is 
\begin{equation*}
  \hat{F}_\e(\xi)=\exp\Big(\frac{1}{\e^2}\int_0^\infty\zeta_\e(e^{-s}\xi)\d s\Big)
\end{equation*}
with $\zeta_\e(\xi)=\hat{J}_\e(\xi)-1$.  
\begin{theorem}
    The equilibrium $F_\e$ of equation \eqref{NLFP} has the following regularity properties. For all $\e\in(0,1]$
    \begin{enumerate}[(i)]
        \item $F_\e\in \mathcal{C}^{k-d}$ for all $k<\frac{1}{\e^2}$ such that $k\ge d$.
        \item $F_\e\notin \mathcal{C}^k$ for all $k>\frac{1}{\e^2}$.
    \end{enumerate}
\end{theorem}
\begin{proof}Let us assume $\zeta_\e$
radially symmetric. If not, define
$\underline{\zeta}_\e$ and $\overline\zeta_\e$ by
\begin{equation*}
    \underline{\zeta}_\e(z)=\min_{|y|=|z|}\zeta_\e(y)\qquad \overline{\zeta}_\e(z)=\max_{|y|=|z|}\zeta_\e(y).
\end{equation*}
These functions are then radially symmetric and satisfy
\begin{gather*}
  \underline{\zeta}_\e(\xi)\le \zeta_\e(\xi)\le
  \overline\zeta_\e(\xi),
  \\
  \underline{\zeta}_\e(\xi)=-|\xi|^2+o(|\xi|^3)\quad \text{and} \quad
  \overline{\zeta}_\e(\xi)=-|\xi|^2+o(|\xi|^3)
  \qquad \text{as $\xi \to 0$.}
\end{gather*}
After a change of
variable $y=e^{-s}|\xi|$, we obtain
\begin{equation*}\label{eq: zeta}
  \int_0^\infty \zeta_\e(e^{-s}|\xi|)\d s=\int_0^{|\xi|}\frac{\zeta_\e (y)}{y}\d y
\end{equation*}
Since $\zeta_\e(y)=-\e^2y^2+o(y^2)$, the singularity at zero is
canceled and, moreover, close to $\xi=0$ we have
$$\hat F_\e(\xi)\approx e^{-\frac{|\xi|^2}{2}}.$$
To study the regularity, we are interested in the decay properties of
the Fourier transform
\begin{enumerate}[(i)]
    \item To prove the first claim, it suffices to show that \begin{equation}\label{eq: condreg1}
    |\hat F_\e(\xi)|\le \frac{c}{1+|\xi|^{k+\eta}}\qquad\text{ for }k<\frac{1}{\e^2}\end{equation}
    for some (small) positive $\eta$.
    Indeed, by the Riemann-Lebesgue lemma, $\hat{J}(y)\to 0$ as
    $|y|\to \infty$. Let $y_\delta > 0$ be such that 
    \begin{equation*}
        |\hat{J}(y)|\le \delta:=\frac{\e^{2}}{2}\Big(\frac{1}{\e^2}-k\Big)\qquad\text{
          for all }|y|>y_\delta
    \end{equation*}
    Without loss of generality, we can assume $y_\delta<|\xi|$,
    otherwise there is nothing to prove.  Then
    \begin{equation*}
    \begin{split}
        \int_0^{|\xi|}\frac{\zeta_\e (y)}{y}\d y= \int_0^{y_\delta}\frac{\zeta_\e (y)}{y}\d y+\int_{y_\delta}^{|\xi|}\frac{\zeta_\e (y)}{y}\d y\le C_1-(1-\delta)(\log |\xi|-\log|y_\delta|)
        \end{split}
    \end{equation*}
    which implies that 
    $$|\hat{F}_\e(\xi)|\le C\exp\Big(-\frac{1-\delta}{\e^2}\log|\xi|\Big)=\frac{C}{|\xi|^{\frac{1-\delta}{\e^2}}}\le\frac{C}{|\xi|^{k+(\frac{1}{\e^2}-k)/2}}.$$
    For $|\xi|$ large, the previous implies condition \eqref{eq: condreg1}
    with $\eta< (\frac{1}{\e^2}-k)/2$ and by standard argument on Fourier  transform $F_\e\in \mathcal{C}^{k-d}$ for all $k<\frac{1}{\e^2}$.
  \item To prove the second claim, we will show that 
    \begin{equation*}
      |\xi|^k|\hat{F}_\e(\xi)|\notin L^\infty\qquad\text{ for }k>\frac{1}{\e^2}
    \end{equation*}
    proceeding in a similar way as before. Notice that since
    $\xi \mapsto \hat{J}(\xi)$ is continuous and decays to zero as
    $|\xi| \to +\infty$, there exists $y_\delta$ such that we can
    bound $\hat{J}$ from below by
    $$\hat{J}(x)\ge- \delta :=\frac{\e^{2}}{2}\Big(\frac{1}{\e^2}-k\Big)\qquad\text{ for
      all } y\ge y_\delta.$$ As a consequence we see that
    \begin{equation*}
      \begin{split}
        |\xi|^k\hat{F}_\e(\xi)\ge |\xi|^k  C\exp\Big(-\frac{1+\delta}{\e^2}\log|x|\Big)=|\xi|^{k-\frac{1+\delta}{\e^2}}\notin L^\infty
      \end{split}
    \end{equation*}
    and thus by standard properties of the Fourier transform,
    $F_\e\notin \mathcal{C}^k$.
\end{enumerate}
\end{proof}
    We just proved that the regularity keeps improving when $\e\to 0$ although the result is not optimal. A precise characterization remains, to our knowledge, an open question.
Moreover, we suspect that under quite general
conditions, the only point at which $F_\e$ is not $\mathcal{C}^\infty$ is the
origin. We are able to show this in dimension $1$, and for radially
symmetric $J$:
  \begin{theorem}
    Let $\Omega\subset\subset \R^d$ such that $0\notin\Omega$. Suppose (at least) one of the two following condition is verified
    \begin{enumerate}[(a)]
        \item $d=1$
        \item $J$ is radially symmetric.       
    \end{enumerate}
     then $F_\e\in \mathcal{C}^\infty(\R^d\setminus \{0\}).$
\end{theorem}
\begin{proof}
    We proceed via a bootstrap argument: the equilibrium satisfies the stationary equation
    \begin{equation}\label{eq: stationary}
        \div(xF_\e)=-\frac{1}{\e^2}(J_\e*F_\e-F_\e)
    \end{equation}
    Since $F_\e\in L^1(\Omega)$, the RHS is also in $L^1(\Omega)$, and
    thus $\div(x F_\e)$ must be in $L^1(\Omega)$ as well.
\begin{enumerate}[(a)]
    \item If $d=1$ then $\div(xF_\e)\in L^1(\Omega)$ and $F_\e\in L^1(\Omega)$ implies that $xF'\in L^1(\Omega)$ and, being $\dist(0,\Omega)\ge \delta$, also $F'$ is integrable. This, by the fundamental theorem of calculus, leads to continuity for $F$ and consequently for the RHS of \eqref{eq: stationary}. Iterating the process one gets  the result.
    \item If $d\ne 1$, the fact that $\div(xF_\e)$ is integrable in $\Omega$ does not assure that $\nabla F\in L^1(\Omega)$, even when we are far from $0$. Therefore, we need an additional hypothesis.
    If $J$ is radially symmetric,  $F_\e$ inherits this property (see Remark \ref{rmk: propinh}), and for such functions the following holds:
    $$
    F_\e\in W^{j,1}(\Omega),\quad \div(xF_\e)\in W^{j,1}(\Omega) \implies F_\e\in W^{j+1,1}(\Omega)\qquad\text{ for all }j\ge 0
    $$
    Indeed, since
    $\div(xF_\e)=dF_\e+x\cdot \nabla F_\e$ and, by hypotheses $x\cdot F_\e\in W^{1,1}(\Omega)$, due to radial symmetry we can write
    $$x\cdot \nabla F_\e= x\cdot F_\e'(|x|)\frac{x}{|x|}=F_\e'(|x|)|x|.$$
    
    Since $0\notin \Omega$, (i.e.$|x|\ge \delta $), it follows that $F'(|x|)\in W^{1,1}(\Omega)$. By definition of $\nabla F_\e$ and again the fact that $0\notin \Omega$, we conclude that  $\nabla F_\e\in W^{j,1}(\Omega)$, which implies that $ F_\e\in W^{j+1,1}(\Omega)$.

    We then iterate the process as in step $(a)$ using \eqref{eq:
      stationary}, improving the regularity up to
    $F_\e\in W^{d,1}(\Omega)$, where $d$ is the dimension. By the
    Sobolev embedding theorem, this implies that $F_\e\in \mathcal{C}(\Omega)$.
 
    If we keep repeating the argument $(k+1)d$ times, we have that
    $F_\e\in \mathcal{C}^k(\Omega)$ for every $k$ and thus the $\mathcal{C}^\infty$
    regularity follows.
\end{enumerate}
\end{proof}

\subsection{Moments and cumulants}

Let $\alpha=(\alpha_1,\dots,\alpha_d)$ be a multi-index with order
$|\alpha|=n$. For a nonnegative Borel measure $f$ on $\R^d$ We define
the \emph{absolute moment of order $n$} by 
$$M_n(f) := \ird |x|^{n}f(x)\d x,$$
(always well-defined but possibly $+\infty$) and the $\alpha$-moment
by
$$m_\alpha(f) := \ird x^\alpha f(x)\d x  =\ird x_1^{\alpha_1}\cdots
x_d^{\alpha_d}f(x) \d x,$$ which is well-defined whenever
$M_n[f] < +\infty$. The \emph{moment generating function}
$\mathfrak{M}(\xi)$ is given by
\begin{equation*}
  \mathfrak{M}[f](\xi)
  :=
  \ird e^{x \xi} f(t,x)  \d x
\end{equation*}
whenever $x \mapsto e^{x \xi} u(t,x)$ is integrable. Moments can then
also be defined via the moment generating function by
$$m_\alpha(f)=D^\alpha\mathfrak{M}[f](\xi) \Big\vert_{\xi=0}.$$

\emph{Cumulants} are an alternative way to describe a distribution,
similar to the moments. They can be defined through the
\emph{cumulative generating function (c.g.f.)} $C[f]$, which is the logarithm
of the moment generating function $\mathfrak{M}(\xi)$:
\begin{equation*}
C[f](\xi) := \log \ird e^{x \xi} f(x)  \d x.
\end{equation*}
The $\alpha-$cross cumulant is then defined by
$$\kappa_\alpha(f)=D^\alpha C[f](\xi) \Big\vert_{\xi=0}.$$
If two distributions have the same moments then they have the same
cumulants and vice versa. However, moments always depend on the
moments of lower order while the cumulants don't.

Using  F\'aa di Bruno's formula, a generalization of
the chain rule for repetitive derivatives, one can prove that cumulants and moments are linked
 via Bell's polynomials,
defined in \cite{comtet_advanced_1974}. These are well known combinatorial
objects in $\R$, defined through their exponential generating
function, that is
\begin{equation}
  \label{eq: bells}
  \exp\left(\sum_{i=1}^\infty \frac{x_i }{ i!} y^i \right)
  = \sum_{n=0}^\infty \frac{B_n(x_1,\dots,x_n)}{n!} y^n.
\end{equation}

In the rest of this section, $u$ denotes a solution to (\ref{NLFP}),
and $C = C(t,\xi)$ is its cumulative generating function:
\begin{equation*}
  C(t,\xi) = \log \ird e^{x \xi} u(t,x)  \d x.
\end{equation*}
This can be explicitly computed in a simple way:
\begin{theorem}
  \label{cgf} Let $J$ satisfy \eqref{eq:J-moments-basic} and let
 $u$ be the solution of \eqref{NLFP}
  with initial data $u_0$. Assume that  $\ird e^{|x|}u_0\d x<\infty$ and
  $\ird e^{|x|}J\d x<\infty$.  Then
$$C(t,\xi)=C_0(e^{-t}\xi)+\frac{1}{\e^2}\int_0^t\zeta(e^{-s}\xi)\d s $$
with $C_0(\xi)=\log\ird e^{x\xi}u_0(x)\d x$ and
$\zeta(\xi)=\mathfrak{M}(J)-1=\ird e^{x\xi}J(x)\d x-1$.
\end{theorem}

\begin{proof}
  We proceed similarly to the work of \cite{alfaro_density_2020} and
  \cite{gil_mathematical_2017}. The partial derivatives of the
  c.g.f. are
$$\partial_{\xi_i}C(t,\xi)=\frac{\ird x_ie^{x\cdot \xi}\d x}{\ird e^{x\cdot \xi}\d x}$$
and
$$ \partial_t C(t,\xi)=\frac{\ird e^{x\xi} \partial_tu \d x}{\ird e^{x\xi} u \d x}=\frac{1}{\e^2} \frac{\ird e^{x\xi} (J_\e*u-u)\d x}{\ird e^{x\xi} u \d x}+\frac{\ird e^{x\xi} \div(xu)\d x}{\ird e^{x\xi} u \d x}$$
Analyzing the first term we get
		\begin{equation*}
			\begin{split}
				&\ird e^{x\xi}(J_\e*u)\d x=\ird e^{x \xi}\Big(\ird J_\e(x-y) u(t,y)\d y\Big)\d x
				\\
				&=\ird e^{y\xi}u(t,y)\d y\ird J_\e(x-y)e^{(x-y)\xi}\d x =\Big(\ird e^{y \xi}u(t,y)\d y\Big)\mathfrak{M}(J_\e).
			\end{split}    
		\end{equation*}
                For the second term, integrating by parts,
$$
	\ird e^{x \cdot \xi }\div_x(xu)\d x=-\ird \nabla_x(e^{x\cdot \xi}) \cdot(x u)=-\xi\cdot \ird e^{x \cdot \xi}x u=-\xi\cdot \nabla_\xi C(t,\xi)\ird e^{x\xi} u \d x
	$$

Therefore\begin{equation*}
    \partial_t C(t,\xi)=\frac{1}{\e^2}\zeta_\e(\xi)-\xi\cdot \nabla_\xi C(t,\xi).
\end{equation*}
One can check that the only solution to this above first order PDE is
the function $C$ in the statement (of course, the PDE can be solved,
for example by the method of characteristics, to obtain the given
formula). Hence the proof is complete.
\end{proof}

\begin{corollary}
 \label{prop:mom_bell_Rd}
 In the hypotheses of the previous theorem,
let $\alpha$ be a multi-index with $|\alpha|=n$. Then,
\begin{enumerate}[(i)]
    \item
   The $\alpha$-cross cumulant is
    \begin{equation*}
    \kappa_\alpha (u(t))=e^{-|\alpha|t}\kappa_\alpha (u_0)+ (1-e^{-|\alpha|t})\frac{1}{|\alpha|\e^2}m_\alpha(J_\e)
    \end{equation*}
 As a consequence 
    \begin{equation*}
     \kappa_\alpha(F_\e) = 
        \frac{1}{|\alpha|\e^2}m_\alpha(J_\e)
\end{equation*}
\item If $d=1$, then, the $n-th$ moment is
    \begin{equation}\label{cummomRd}
    m_n(u)=B_n(\kappa_1,\dots,\kappa_n)=\sum_{k=1}^n B_{n,k}(\kappa_1,\dots,\kappa_{n-k+1})
\end{equation}
where $B_n$ are the complete (exponential)
Bell's Polynomials defined in \eqref{eq: bells} and $B_{n,k}$ are called partial Bell's Polynomials\footnote{From a combinatorial point of view, $B_{n,k}(x_1,\dots, x_{n-k+1}$) enumerates the possible ways to partition a set of 
$
n$ elements into 
$k$ non-empty subsets. Each term of the polynomials represents  specific way of distributing the $n$ elements in the $k$ subsets. For example, $B_{5,2}=10x_2x_3+5x_1x_4$ means that there are $10$ ways to partition a set of $5$ elements into two subsets of cardinality respectively $2$ and $3$ and $5$ ways to do so in two subsets of cardinality $1$ and $4$.
}.

 \end{enumerate}

\end{corollary}
\begin{proof}
    From Theorem \ref{cgf} the $\alpha$-th derivative of the c.g.f.~is
    \begin{equation*}
      D^{\alpha} C(t,\xi)=e^{-nt}(D^\alpha C_0) (e^{-t}\xi)+\frac{1-e^{-nt}}{n\e^2}(D^\alpha \beta) (e^{-t}\xi)
    \end{equation*}
    and thus, evaluating at $\xi=0$ we get the $\alpha$-mixed cumulant
    \begin{equation*}
        \kappa_\alpha (u(t))=e^{-nt}\kappa_\alpha (u_0)+ (1-e^{-nt})\frac{1}{n\e^2}m_\alpha(J_\e)
    \end{equation*}
    Expression \eqref{cummomRd} comes directly from the definition of
    Bell's polynomials \eqref{eq: bells}. 
  \end{proof}

\begin{remark}
       One can generalize the last result to higher dimension but it appears that the multidimensional version of Bell's polynomials is not very well known. We  refer to   \cite{Withers2010}, \cite{schumann2019multivariate} and references therein for further information.
\end{remark}
\begin{remark}
 By computing explicitly the time derivative of the
    $\alpha$ moment, i.e.
    \begin{equation*}\label{34b}
      \begin{split}
        \ddt m_{\alpha}(u(t))
        =\frac{1}{\e^2}(m_{\alpha}(J_\e*u)-m_{\alpha}(u))-m_{\alpha}(\div
                            (xu)),
      \end{split}
    \end{equation*}
     one can obtain the following recursive formula for the moments of the solution (and consequently for the equilibrium):
 \begin{equation*}
     m_{\alpha}(u(t))=
         \gamma_\alpha(1-e^{-nt})+e^{-nt}m_{\alpha}(u_0) 
    \end{equation*}
    where
    \begin{equation*}
        \gamma_\alpha=\frac{1}{n\e^2}\sum_{\substack{\beta\le\alpha\\\beta\ne 0}}^{n}\binom{\alpha}{\beta}m_{\beta}(J)m_{\alpha-\beta}(u).
    \end{equation*}

\end{remark}

\subsection{Tails}
\label{sec:tails}

For the usual (local) equilibrium for the Fokker-Planck equation
\eqref{eq:FP}, is a standard Gaussian. In the nonlocal approximation
in this paper, the tails of the equilibrium seem to decay slower, no
matter how fast the kernel $J$ is decaying. As a consequence
of Harris's theorem and our Lyapunov condition estimates (Theorem \ref{thm:asymptconv} \textit{(ii)}), we know for an
exponentially decaying kernel $J\in L_{\exp,a'}$, the unique
probability equilibrium has exponentially decaying tails; that is, for
every $a<a'\e^{-1}$
$$\ird F_\e(x)e^{a\ap{x}}\d x<\infty.$$
We conjecture that for a kernel $J$ decaying fast enough, the tails of
the equilibrium have a ``Poisson-like'' behaviour: they decay roughly
like $e^{-|x|\log (|x|+1)}$. This implies that
$$\ird F_\e(x)e^{a \ap{x}^\gamma}\d x=\infty\quad \text{ for every }\gamma>1.$$
We have not been able to prove this ``Poisson-like'' behavior in
general, but we can give a good upper bound when $J$ has compact
support:

\begin{proposition}
     Let $J:\R^d\to \R$ be compactly supported and its support is in the ball $B_R$ for some $R>0.$ Then 
    \begin{equation*}
        \int F_\e(x)e^{a\ap{x}\log\ap{x}}\d x<\infty
    \end{equation*}
    for any $a\le\frac{1}{\e R}$.
\end{proposition}

\begin{proof}
  Let $\f(x)=\exp( a\ap{x}\log\ap{x} )$ with $a$ to be determined
  later. In the same spirit of the proofs of theorems \ref{thm:lyapunovpow} and \ref{thm:lyapunovexp}, one can compute \begin{equation*}\begin{split}
            &\frac{1}{\e^2}\ird J_\e(y)(\f(x+y)-\f(x))\d y=\frac{1}{\e^2}\ird J(z)(\f(x+\e z)-\f(x))\d z\\
            &=\frac{1}{\e^2}\ird J(z)\Big(\f (x)+\frac{1}{2}\sum_{|\alpha|=2}R_\alpha(\xi)\e^2 z^{\alpha}-\f(x)\Big)\d z=\frac{1}{2}\ird J(z) \sum_{i=1}^d \partial_i^2\f(\xi)z_i^{2}\d z.
          \end{split}
	\end{equation*}
        With a simple calculation
        \begin{equation*}
          \begin{split}
            \sum_{i=1}^d\partial_i^2\f(x)&=\sum_{i=1}^da\frac{\f}{\ap{x}^3}\Big(a^2x_i^2\ap{x}(\log^2\ap{x}+2\log\ap{x}+1)+(1+\sum_{j\ne 1}x_j^2)(\log\ap{x}+1)\Big)\\
            &\le C_1a^2\log^2\ap{x}\f(x),
          \end{split}
 \end{equation*}
 and since $\f$ is radially increasing
 $\ap{|x|+|z|}\le \ap{x}+\ap{z}$, and
 $\log^2\ap{x+z}\lesssim \log^2{\ap{x}}+\log^2\ap{z}$ we have
 \begin{equation*}
     \begin{split}
        \frac{1}{\e^2}&\ird J_\e(y)(\f(x+y)-\f(x))\d y
         \le\frac{C_1}{2}a^2\int_{B_R}J(z) \log^2\ap{\xi}\f(\xi) \d z\\
         &\le  C_2a^2  \log^2\ap{x}\f(|x|+\e R)\int_{B_R} J(z) \d z= C_2a^2  \log^2\ap{x}\ap{|x|+\e R}^{a\ap{|x|+\e R}}\\
    &\le C_2a^2  \log^2\ap{x}\ap{|x|+\e R}^{a\e R}e^{a\ap{x}\log\ap{x}}=\Big[ C_2a^2  \log^2\ap{x}\ap{|x|+\e R}^{a\e R}\Big]\f(x)
     \end{split}
 \end{equation*}
  Moreover,
\begin{equation*}
\begin{split}
    x\cdot \nabla\f&=x\cdot a\frac{x}{\ap{x}}(\log\ap{x}+1)\f(x)\le\Big[ C_3a\ap{x}\log\ap{x}\Big]\f(x)
    \end{split}
\end{equation*}
Consequently,
	\begin{equation}\label{eq: LyappowfinalPoiss}
		\begin{split}
			L^*_\e&\f(x)=\frac{1}{\e^2}\ird J_\e(y)(\f(x+y)-\f(x))\d y-x\cdot \nabla \f (x)\\
            &\le  \Big[C_2a^2  \log^2 \ap{x}\ap{|x|+\e R}^{a\e R}- C_3 a\ap{x}\log\ap{x}\Big]\f(x)\le    C-\lambda \f
		\end{split}
	\end{equation}
        for some $C$ and $\lambda>0$ and $a < \frac{1}{\e R}$.
\end{proof}

\section*{Acknowledgments}
We would like to thank Josephine Evans for discussions which gave rise
to the early ideas of this paper. We would also like to thank
Alejandro Gárriz for several discussions and suggestions.

The project that gave rise to these results received the support of a
fellowship from the ``la Caixa'' Foundation (ID 100010434). The
fellowship code is LCF/BQ/DI22/11940032. Both authors acknowledge
support from the ``Maria de Maeztu'' Excellence Unit IMAG, reference
CEX2020-001105-M, funded by MCIN/AEI/10.13039/501100011033/. They were
also supported by grant PID2023-151625NB-I00 and the research network
RED2022-134784-T from MCIN/AEI/10.13039/501100011033/.

\addcontentsline{toc}{section}{References}
\bibliography{bibliography}
\end{document}